\newtheorem{theorem}{Theorem}[section]
\newtheorem{lemma}[theorem]{Lemma}
\newtheorem{prop}[theorem]{Proposition}
\newtheorem{cor}[theorem]{Corollary}
\theoremstyle{definition}
\newtheorem{definition}[theorem]{Definition}
\newtheorem{remark}[theorem]{Remark}
\newtheorem{example}[theorem]{Example}
\newcommand{\on}{\operatorname}
\renewcommand{\d}{\partial}
\renewcommand{\Im}{\on{im}}
\newcommand{\D}{\mathcal{D}}
\newcommand{\PP}{\mathcal{P}}
\newcommand{\inv}{^{-1}}
\newcommand{\eps}{\varepsilon}
\newcommand{\bb}{\mathbf{b}}
\newcommand{\CC}{\mathbb{C}}
\newcommand{\RR}{\mathbb{R}}
\newcommand{\ZZ}{\mathbb{Z}}
\newcommand{\QQ}{\mathbb{Q}}
\DeclareMathOperator{\tb}{tb}
\DeclareMathOperator{\Ker}{ker}
\title{Surface singularities and planar contact structures}
\author{Paolo Ghiggini}
\author{Marco Golla}
\address{CNRS, Laboratoire Jean Leray, Universit\'e de Nantes, Nantes, France}
\email{paolo.ghiggini@univ-nantes.fr}
\email{marco.golla@univ-nantes.fr}
\author{Olga Plamenevskaya}
\address{Department of Mathematics, Stony Brook University, Stony Brook, NY, U.S.A.}
\email{olga@math.stonybrook.edu}
\begin{document}

\begin{abstract}
We prove that if a contact 3-manifold admits an open book decomposition of genus 0, a certain intersection pattern cannot appear in the homology of any of its minimal symplectic fillings, and moreover, fillings cannot contain symplectic surfaces of positive genus.  
Applying these obstructions to canonical contact structures on links of normal surface singularities, we show that links of isolated singularities of surfaces in the complex 3-space are planar only in the case of $A_n$-singularities.
In general, we characterize completely planar links of normal surface singularities (in terms of their resolution graphs); these singularities are precisely rational singularities with reduced fundamental cycle (also known as minimal singularities).
We also establish non-planarity of tight contact structures on certain small Seifert fibered L-spaces and of contact structures arising from the Boothby--Wang construction applied to surfaces of positive genus. 
Additionally, we prove that every finitely presented group is the fundamental group of a Lefschetz fibration with planar fibers. 
\end{abstract}

 \maketitle

\section{Introduction and background} 

Since the groundbreaking work of Giroux~\cite{Gi}, open books have played a major role in 3-dimensional contact topology;
certain properties of open books are related to questions of tightness and fillability.
While a compatible open book decomposition is not unique, one can ask what the smallest possible genus of a page is.

In particular, contact manifolds that admit {\em planar} open book decompositions (i.e. with page of genus zero and possibly multiple boundary components) have a number of special properties.
For example, Etnyre showed that any symplectic filling for a planar contact structure has a negative definite intersection form~\cite{Etn};
it follows that any contact structure that arises as a perturbation of a taut foliation cannot be planar, since it admits fillings with arbitrary $b_2^+$~\cite{Eliashberg, Etnyre}.
This implies, by~\cite{RasmussenS, HRRR}, that if $Y$ is a graph manifold which is not an L-space, then $Y$ admits a non-planar contact structure.
(Recall that L-spaces, whose name derives from their Floer-homological similarity to lens spaces, are 3-manifolds with the simplest possible Heegaard Floer homology~\cite{OSlens}.)
By contrast, all contact structures on lens spaces are planar~\cite{Scho};
the same is known for some other L-spaces, although in general L-spaces can admit non-planar contact structures as well~\cite{LS3}. (Note that overtwisted contact structures are always planar by~\cite{Etn}.)

In this paper we develop new obstructions in terms of the topology of symplectic fillings: namely the presence of either a certain pattern in the intersection form or embedded symplectic surfaces with positive genus.
Using these obstructions, we rule out planarity for a number of interesting contact
structures. All contact manifolds in this paper are assumed closed and co-oriented. 

Before stating the general conditions, we interpret our obstructions for canonical contact structures on links of normal surface singularities.
Our first result is for isolated singularities of hypersurfaces in $\CC^3$, the second is for more general surfaces.
(The definitions involved in the second statement are more technical, and we defer them to Section~\ref{normal-links}.)

Consider a complex surface $\Sigma\subset \CC^N$ with an isolated critical point at the origin.
For a sufficiently small $\eps>0$, the intersection $Y= \Sigma \cap S^{2N-1}_{\eps}$ with the
sphere $S^{2N-1}_{\eps}= \{|z_1|^2+|z_2|^2+\dots +|z_N|^2 = \eps\}$ is a smooth 3-manifold called the {\em link of the singularity}.
The induced contact structure $\xi$ on $Y$ is the distribution of complex tangencies to $Y$, and is referred to as the {\em canonical} contact structure on the link.
The contact manifold $(Y, \xi)$ is independent of the choice of $\eps$, up to contactomorphism.

\begin{theorem}\label{sing}
Let $(Y, \xi)$ be the link of an isolated singularity of a complex surface in $\CC^3$ with its canonical contact structure. 
Then $\xi$ is planar if and only if the singularity is of type~$A_n$. 
\end{theorem}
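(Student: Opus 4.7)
The plan is to prove the two directions separately. The ``if'' direction is quick: the link of an $A_n$-singularity $z_1^2+z_2^2+z_3^{n+1}=0$ is the lens space $L(n+1,n)$, and by Sch\"onenberger \cite{Scho} every contact structure on a lens space is planar. Essentially all of the work lies in the ``only if'' direction: assuming the singularity is not of type $A_n$, show that the canonical contact structure $\xi$ is not planar.

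To prove non-planarity I would use the symplectic filling $W$ of $(Y,\xi)$ obtained as a regular plumbing neighborhood of the exceptional divisor in the minimal good resolution $\pi\colon \widetilde{X} \to X$. The components $E_1,\ldots,E_r$ of the exceptional divisor are symplectic (in fact holomorphic) surfaces in $W$, with intersection numbers read off the weighted dual resolution graph $\Gamma$. Since the singularity is a hypersurface in $\CC^3$ it is Gorenstein, so adjunction forces the canonical class to be an integral divisor supported on $E$ satisfying $K\cdot E_i = 2g_i - 2 - E_i^2$ for every $i$. This is the numerical constraint on $\Gamma$ that will drive the argument.

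The proof then splits into two cases. In the \emph{positive genus case} some $E_i$ has $g_i \geq 1$; then $E_i$ is a symplectic surface of positive genus inside $W$, and I apply the symplectic-surface obstruction proved earlier in the paper to conclude that $\xi$ cannot be planar. In the \emph{rational case} every $E_i$ is a sphere, and I claim $\Gamma$ must contain a vertex of valence $\geq 3$. Indeed, if $\Gamma$ were a linear chain of rational curves, $(Y,\xi)$ would be the link of a cyclic quotient singularity $X_{p,q}$; the Gorenstein hypothesis then forces $q=p-1$, which is exactly $A_{p-1}$, contradicting the assumption. Thus outside the $A_n$ case there is a branching vertex $E_0$ meeting (at least) three other spheres $E_{i_1}, E_{i_2}, E_{i_3}$, and the resulting plumbing of symplectic spheres realizes inside $W$ precisely the intersection pattern which the paper's homological obstruction forbids from appearing in any filling of a planar contact structure.

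The main obstacle I foresee is matching the abstract obstructions proved earlier in the paper to the concrete configurations coming from resolution graphs. For the positive genus case the application should be essentially direct. For the branching case one must verify that, under the negative definiteness of the intersection form and the Gorenstein constraint, the submatrix determined by a valence $\geq 3$ vertex together with the neighboring spheres already falls into the forbidden class identified in the paper. If the paper's obstruction is phrased uniformly enough for arbitrary branchings, this verification is automatic; otherwise a short case analysis on the self-intersection of the central sphere and the multiplicities of $K$ along the adjacent curves, combined with adjunction, should complete the argument.
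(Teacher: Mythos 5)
Your approach is genuinely different from the paper's, and the difference matters: it leaves a real gap in the branching case.

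The paper's proof of Theorem~\ref{sing} makes a move you have bypassed by working with the resolution instead of the Milnor fiber. It first observes (Etnyre) that every symplectic filling of a planar contact manifold must be negative definite, then applies this to the \emph{Milnor fiber}: by a result of Tjurina, the only isolated hypersurface singularities in $\CC^3$ whose Milnor fiber has negative definite intersection form are the simple (ADE) singularities. This immediately reduces the problem to the ADE list; $A_n$ is planar by Sch\"onenberger, and $D_n$, $E_6$, $E_7$, $E_8$ are then ruled out because they all contain the $D_4$-configuration, where all weights are $-2$ and the hypotheses of the homological obstruction (Theorem~\ref{general}, proved via Lemma~\ref{d4}) are satisfied trivially. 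Working with the resolution forfeits this reduction, since the resolution of a normal singularity is \emph{always} negative definite, so you get no contradiction from Etnyre and no restriction to ADE from Tjurina.

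The concrete gap is in your ``rational case.'' You argue that a non-$A_n$ singularity with all-sphere resolution must have a branching vertex, and then claim the resulting plumbing ``realizes precisely the intersection pattern which the paper's homological obstruction forbids.'' But Theorem~\ref{general} is not phrased for arbitrary branchings: it requires every vertex $B_i$ adjacent to the branch vertex $X$ to have weight $-2$ or $-3$, and requires $X\cdot X > -k$. Nothing in your setup guarantees either. There do exist Gorenstein hypersurface singularities in $\CC^3$ (e.g.\ Brieskorn singularities $x^2+y^3+z^r$ with $r\geq 7$) whose minimal resolution graph is a tree of spheres with a branching vertex yet whose weights are not restricted to $\{-2,-3\}$; these are not rational, so the fact ``rational Gorenstein $=$ ADE'' does not immediately apply, and you would have to handle them separately. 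The ``short case analysis'' you propose — constraining the central weight and the multiplicities of $K$ via adjunction — is in effect a re-derivation of Tjurina's classification, and is not short. You also tacitly assume $\Gamma$ is a tree; this is true because links of hypersurface singularities in $\CC^3$ are rational homology spheres (nondegeneracy of the Milnor lattice), but you should say so. The paper's proof of Theorem~\ref{sing2} does carry out an argument in the spirit of yours — bad vertex versus no bad vertex, genus via smoothing — but relies on Lemma~\ref{spheres-coefficients} and a careful count that goes beyond the bare statement of Theorem~\ref{general}, and it still needs the separate fact that the Gorenstein graphs without bad vertices are exactly the $A_n$ graphs to recover Theorem~\ref{sing}. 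Citing Tjurina up front, as the paper does, is the cleaner route.
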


\begin{theorem}\label{sing2}
Let $(Y,\xi)$ be the link of a normal surface singularity with its canonical contact structure.
Then $\xi$ is planar if and only if the singularity 
is  rational with reduced fundamental cycle.
\end{theorem}

Rational singularities with reduced fundamental cycle are also known as {\em minimal} singularities in the literature; they were first introduced by Koll\'ar~\cite{Ko}. We avoid the latter term because of possible confusion 
with its other common meaning (referring to the absence of symplectic $(-1)$-spheres, or exceptional divisors). These singularities can be defined in terms of their dual resolution graphs; namely, a rational singularity with reduced fundamental cycle    
has a good resolution whose graph is a tree of spheres with no bad vertices (see Definition~\ref{def-redfundc}).

Deformation theory of rational singularities with reduced fundamental cycle (and more generally, sandwiched singularities)  was studied in~\cite{djvs}. In a future work, we will discuss the theory of~\cite{djvs} from the symplectic topology viewpoint, using planar open books, and study the relation between smoothings  and
symplectic fillings of  canonical contact structures for this class of singularities.  

The ``if'' direction of Theorem~\ref{sing2} was essentially proven by Sch\"onenberger~\cite{Scho}, who established planarity of contact structures obtained by Legendrian surgery on Legendrian links associated to graphs with no bad vertices.
From a different perspective, contact structures on links of singularities whose graphs have no bad vertices were discussed by N\'emethi and Tosun~\cite{NT}, who showed that in this case the Milnor open books (associated to the Artin cycle $Z_{\rm min}$) are planar.
Milnor open books are open books that arise as follows: 
if $f$ is a holomorphic function on the complement of the singular point on the surface, the function $f/\|f\|$ defines a fibration on the link in the complement of the set $\{ f=0 \}$. 
(This set is the binding of the open book.) 
The Milnor open book supports the canonical contact structure in the link of the singularity by Caubel, N\'emethi, and Popescu-Pampu~\cite[Theorem 1.3]{CNPP}. A connection between the constructions of~\cite{NT} and~\cite{Scho} is given by Gay and Mark in~\cite{GayMark}, who give an explicit open book and factorization of its monodromy.

It is known that for general links of singularities, the smallest possible genus of compatible Milnor open books can be strictly greater than the minimal genus of arbitrary open books compatible with the canonical contact structure~\cite{Bu?}.
Together with~\cite{NT}, Theorem~\ref{sing2} shows that Milnor open books (in the sense of~\cite[Section~2.2]{NT}) minimize the genus in the planar case.

\begin{cor} Let $(Y,\xi)$ be the link of a normal surface singularity with its canonical contact structure. If $\xi$ is planar, then $\xi$ has a Milnor open book which 
is planar. 
\end{cor}

As a corollary of the proof of Theorem~\ref{sing2}, we obtain the following;
we say that a singularity is planar if the canonical contact structure on its link is planar.
\begin{cor}\label{deformation}
There can be no strong symplectic cobordism from a non-planar normal surface singularity to a planar one.
In particular, a planar normal surface singularity cannot be deformed to one whose link is not planar.
\end{cor}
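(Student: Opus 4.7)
The plan is to combine the planarity obstructions of Theorem~\ref{sing2} with a gluing construction, and then observe that deformations of surface singularities fit this framework. Suppose for contradiction that $(W,\omega)$ is a strong symplectic cobordism from a non-planar singularity link $(Y_-,\xi_-)$ to a planar singularity link $(Y_+,\xi_+)$. By the ``only if'' direction of Theorem~\ref{sing2}, the minimal good resolution of the $Y_-$ singularity is a symplectic plumbing $X_-$ whose exceptional divisor exhibits the forbidden configuration of symplectic surfaces (either a non-tree resolution graph, a non-spherical component, or a bad vertex) that the general obstruction of the paper rules out in any symplectic filling of a planar contact structure. First I would glue $X_-$ to the concave boundary of $W$ to obtain a strong symplectic filling $\widetilde X_+ := X_- \cup_{Y_-} W$ of $(Y_+,\xi_+)$.

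Next I would observe that the exceptional divisor lies in $X_- \subset \widetilde X_+$, disjoint from a collar of $Y_-$, so its symplectic surfaces, along with their self- and mutual intersection numbers, are inherited unchanged by $\widetilde X_+$. Applying the main obstruction, which forbids precisely such configurations of symplectic surfaces in any symplectic filling of a planar contact manifold, to $\widetilde X_+$ then yields the desired contradiction and proves the first assertion.

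For the second assertion, I would reduce deformations to strong symplectic cobordisms in the standard way. A one-parameter flat deformation $\pi : \mathcal{X} \to \Delta$ with central fiber $X_0$ and nearby fiber $X_t$ gives, upon intersecting $X_t$ with a small Milnor ball around the singular point of $X_0$, a symplectic manifold with $Y_0$ as convex boundary and the disjoint union of the links of the singular points of $X_t$ as concave boundary; this is a strong symplectic cobordism from $Y_t$ to $Y_0$. Thus a deformation from a planar $X_0$ to a non-planar $X_t$ would produce precisely the cobordism forbidden by the first part.

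The main obstacle I anticipate is verifying that the obstructing data genuinely survives the gluing. Since the exceptional divisor of $X_-$ sits entirely inside $X_-$ and its intersection numbers are computed there, the local symplectic configuration transfers to $\widetilde X_+$ without change; what requires some care is ensuring that the hypothesis of the obstruction theorem, which may be phrased in terms of the homology of the filling rather than just the presence of symplectic surfaces, still applies once the extra topology of $W$ has been added on. Working with the image of $H_2(X_-) \to H_2(\widetilde X_+)$ under the inclusion, or more directly with the symplectic-surface version of the obstruction, should handle this cleanly.
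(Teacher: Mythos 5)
Your proposal follows the paper's proof essentially verbatim: glue the (minimal) resolution of the non-planar singularity to the concave end of the putative cobordism to produce a symplectic filling of the planar contact manifold containing the forbidden configuration of symplectic surfaces from the proof of Theorem~\ref{sing2}, and reduce the deformation statement to the cobordism statement via the standard Milnor-ball construction. The concern you flag at the end --- whether the obstruction survives the added topology of the cobordism --- is real but resolvable exactly as you suggest, since the relevant obstructions in the proof of Theorem~\ref{sing2} are phrased in terms of symplectic surfaces and their intersection numbers, which are local to $X_-$; the paper treats this point with the same level of brevity.
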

This corollary goes in the direction of arguing that there can be no Weinstein cobordism from a non-planar contact structure to a planar one, or, more generally, that the support genus is non-decreasing under symplectic cobordisms.

For general planar contact manifolds, we show that a contact structure given by a plumbing graph with a bad vertex cannot be planar if the  vertices adjacent to the bad one have weight $-2$ or $-3$, as in conditions~\eqref{bad-form}.

\begin{theorem}\label{general}
A planar contact manifold {\em cannot} have a minimal symplectic filling $W$ with the following property:
for some $k>0$, there exist homology classes $B_1, \dots B_k, X \in H_2(W)$ such that 
\begin{equation}\label{bad-form}
\begin{array}{lll}
B_i \cdot X  = 1, & & \quad i =1, \dots, k \\
B_i \cdot B_j  =  0, & & \quad i \neq j \\
B_i \cdot B_i  \in  \{-2, -3\}, & & \quad i = 1, \dots, k\\
X \cdot X  > -k. & &
\end{array}
\end{equation}
In other words, the intersection graph of $W$ cannot have a configuration shown in Figure~\ref{bad}.
\end{theorem}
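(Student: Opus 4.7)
The plan is to invoke Wendl's structure theorem for strong symplectic fillings of planar contact manifolds, which presents any minimal such filling as an allowable positive Lefschetz fibration $\pi\colon W\to D^2$ extending the planar open book. As a preliminary step, one reduces to the minimal case; since the $B_i$ have $B_i^2\in\{-2,-3\}$ they are never themselves the blown-down exceptional spheres, and one must check that the configuration \eqref{bad-form} can be preserved through a careful choice of blow-down. Once in a Lefschetz fibration, the fiber is the planar page $F=\Sigma_{0,n}$, and the critical points determine vanishing cycles $\gamma_1,\ldots,\gamma_N \subset F$.

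Next, I would establish a homological dictionary: a class in $H_2(W;\ZZ)$ is represented by a closed surface obtained from a subsurface $P\subset F$ by capping off those boundary components of $P$ that lie in the interior of $F$ with Lefschetz thimbles, so that $\d P \cap \on{int}(F)$ is a multiset drawn from $\{\gamma_j\}$. The self-intersection of such a class equals minus the number of thimbles used, and the intersection pairing between two such classes can be read off combinatorially from how their associated subsurfaces overlap and share vanishing cycles. This is a refinement of the picture used by Etnyre \cite{Etn} to prove negative-definiteness, and will be the key technical input.

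Under this dictionary, a $(-2)$- or $(-3)$-sphere class $B_i$ corresponds, after handleslides, to an annulus (if $B_i^2=-2$) or a pair of pants (if $B_i^2=-3$) $P_i\subset F$ whose boundary consists of $|B_i^2|$ vanishing cycles. The relation $B_i\cdot B_j=0$ can be promoted to a geometric statement that the $P_i$ may be chosen to lie in essentially disjoint regions of $F$, and the condition $B_i\cdot X=1$ then constrains any subsurface representative $Q$ of $X$ to meet each $P_i$ in a prescribed manner contributing $+1$ to the intersection count.

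The heart of the argument, and its main obstacle, is the final intersection-form calculation: one expands $X\cdot X$ in terms of the combinatorial data for $Q$ and the thimbles of $X$, and shows that the $k$ disjoint patches $P_i$ together with the prescribed intersections force $X\cdot X\le -k$, contradicting the hypothesis $X\cdot X>-k$. The case $B_i^2=-3$ is notably more delicate than $B_i^2=-2$, since pair-of-pants configurations admit more complicated decompositions than annular ones, and carefully tracking the thimble contributions of $X$ beyond the subsurface-level counting is where the technical subtlety lies.
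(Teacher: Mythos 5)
Your high-level strategy matches the paper's: apply Wendl's theorem to obtain a planar Lefschetz fibration, interpret $H_2(W)$ in terms of null-homologous linear combinations of vanishing cycles with thimbles attached, and deduce a contradiction from the intersection data. However, there are two genuine problems with the proposal as written.

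First, a small but real issue: the theorem is about \emph{Stein} fillings, which are automatically minimal, so the preliminary blow-down reduction you describe is unnecessary here. (That reduction is needed for the weak-filling corollaries elsewhere in the paper, but carrying it along here only adds a complication you then say ``must be checked'' without checking it.)

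Second, and much more seriously, you have misplaced the crux of the argument. You assert that ``the relation $B_i\cdot B_j=0$ can be promoted to a geometric statement that the $P_i$ may be chosen to lie in essentially disjoint regions of $F$,'' and then identify the difficulty as lying in the final calculation of $X\cdot X$. This is exactly backwards. Once one knows the paper's intersection-form formula $B\cdot B' = -\sum_i b_i b_i'$ (Proposition~\ref{iform}, which you gesture at but never state precisely), the final step is trivial: the $B_i$ have pairwise disjoint ``supports'' (sets of vanishing cycles with nonzero coefficient), $X\cdot B_i\neq 0$ forces the support of $X$ to meet each of them, so $X$ uses at least $k$ distinct vanishing cycles and $X\cdot X\le -k$. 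The actual work is in proving the disjoint-support statement, and that is \emph{not} an automatic promotion. It requires a case analysis on the possibilities $(-2,-2)$, $(-2,-3)$, $(-3,-3)$: one must show, for example, that if $B_1=[\alpha_1-\alpha_2]$ with $\alpha_1,\alpha_2$ homologous, then a $(-3)$-class $B_2$ whose support overlaps $\{\alpha_1,\alpha_2\}$ would have to be $\pm[\alpha_3-\alpha_1-\alpha_2]$, which is impossible because $\alpha_1$ and $\alpha_2$ wind once around the same holes and hence $\alpha_1+\alpha_2$ cannot be homologous to a single embedded curve $\alpha_3$ in a planar page. This argument makes essential use of planarity through winding numbers and has no one-line geometric substitute. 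Until you supply this lemma, the proof has a hole exactly where you believed the ``essentially disjoint regions'' claim was free.

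Relatedly, your representation of a $(-2)$-class as ``an annulus'' and a $(-3)$-class as ``a pair of pants'' in $F$ is an oversimplification: the two (or three) vanishing cycles involved need only be homologous as a combination in $H_1(F)$, not isotopic or even cobounding a nice subsurface (cf.\ Figure~\ref{homolo-curves}). Working directly with the coefficient vectors, as in Proposition~\ref{iform}, sidesteps this.
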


\begin{figure}[ht]
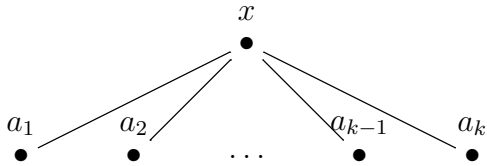

$\xygraph{
!{<0cm,0cm>;<1cm,0cm>:<0cm,1cm>::}
!~-{@{-}@[|(2.5)]}
!{(0,0) }*+{\bullet}="a1"
!{(1.5,0) }*+{\bullet}="a2"
!{(3,0) }*+{\dots}
!{(4.5,0) }*+{\bullet}="ak1"
!{(6,0) }*+{\bullet}="ak"
!{(3,1.5) }*+{\bullet}="c"
!{(0,0.4) }*+{a_1}
!{(1.5,0.4) }*+{a_2}
!{(4.5,0.4) }*+{a_{k-1}}
!{(6,0.4) }*+{a_k}
!{(3,1.9) }*+{x}
"c"-"a1"
"a2"-"c"
"ak1"-"c"
"ak"-"c"
}$
\caption{This intersection pattern cannot appear in the homology of a Stein filling of a planar contact structure if $-x < k$,   $a_i=-2$ or $a_i = -3$ for each $i$.
There can be more edges going out of each of the vertices labelled with $a_i$ and out of the central vertex.
All weights on additional vertices are also supposed to be negative.}
\label{bad}
\end{figure}

In particular, we have the following corollary for small Seifert fibered L-spaces.

\begin{cor}\label{seifert}
Tight contact structures on a Seifert fibered space $M(-2; r_1, r_2, r_3)$ are never planar if this manifold is an L-space and $r_1, r_2, r_3\geq \frac13$.
\end{cor}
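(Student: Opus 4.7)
The plan is to apply Theorem~\ref{general} with $k=3$ to a Stein filling of any tight contact structure on $Y=M(-2;r_1,r_2,r_3)$.

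Writing $r_i=\beta_i/\alpha_i$ with $0<\beta_i<\alpha_i$, the manifold $Y$ is the boundary of a star-shaped plumbing $P$ whose central vertex $v_0$ has weight $-2$ and whose three branches are linear chains realising the Hirzebruch--Jung continued fraction expansions of $\alpha_i/\beta_i$. The hypothesis $r_i\ge 1/3$ is equivalent to $\alpha_i/\beta_i\le 3$, and hence the vertex of each branch adjacent to $v_0$ has weight $-2$ or $-3$. Setting $X=[v_0]\in H_2(P)$ and letting $B_1,B_2,B_3$ denote the homology classes of the three neighbours of $v_0$, the intersection data read directly off $P$:
\[ X\cdot X=-2>-3=-k,\quad B_i\cdot X=1,\quad B_i\cdot B_j=0\ (i\neq j),\quad B_i\cdot B_i\in\{-2,-3\}. \]
Thus $P$ realises the bad configuration of Theorem~\ref{general} with $k=3$, and since $P$ carries a Stein structure filling the canonical contact structure on $Y$, that contact structure is not planar.

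For a general tight $\xi$ on $Y$, I would appeal to the classification of tight contact structures on small Seifert fibered L-spaces (Ghiggini, Lisca--Stipsicz, Wu): every such $\xi$ is Stein fillable, and each Stein filling $W$ arises from Legendrian surgery on a Legendrian realisation of the link dual to $P$. In any such $W$ the 2-handles corresponding to $v_0$ and to its three neighbours persist, yielding classes $X,B_1,B_2,B_3\in H_2(W)$ with the same intersection numbers as in $P$; Theorem~\ref{general} then rules out planarity of $\xi$.

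The main difficulty is verifying that these classes really do survive in every Stein filling, rather than only in the canonical plumbing. The cleanest route is symplectic rather than handlebody-theoretic: the central and adjacent spheres of $P$ are symplectically embedded spheres of square $-2$ or $-3$, and a Gromov--McDuff type argument -- after capping $W$ off by a convex neighbourhood of the complement of these spheres in $P$ -- transports them into any strong symplectic filling $W$ of $\xi$, preserving the required intersection numbers. This argument applies to arbitrary strong fillings and so covers every tight contact structure at once.
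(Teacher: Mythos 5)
Your identification of the bad configuration is exactly right: the central vertex $X$ with $X\cdot X=-2$ and its three neighbours $B_1,B_2,B_3$ with squares in $\{-2,-3\}$ give precisely the pattern~\eqref{bad-form} with $k=3$, and the arithmetic $r_i\ge\frac13\iff\alpha_i/\beta_i\le3$ is correct. However, the difficulty you raise at the end --- that the classes must ``survive in every Stein filling'' --- is not actually present, and the Gromov--McDuff detour is unnecessary. Theorem~\ref{general} only asks for the \emph{existence} of a Stein filling containing the configuration; it does not require the configuration to appear in all Stein fillings of $(Y,\xi)$.

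The paper's proof closes the gap exactly where you hesitated, using the classification of tight contact structures on $M(-2;r_1,r_2,r_3)$ from~\cite{Gh}: every tight contact structure on this space (when it is an L-space) is obtained by Legendrian surgery on a chain of Legendrian unknots whose Thurston--Bennequin numbers are determined by the continued fraction expansion of $-1/r_i$. That Legendrian surgery diagram directly produces, for each tight $\xi$, a Stein filling whose intersection form is the star-shaped plumbing graph, which contains your $X,B_1,B_2,B_3$. There is no need to transport spheres between fillings: the classification hands you a Stein filling with the bad configuration for each $\xi$ simultaneously, and Theorem~\ref{general} applies immediately.

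Two smaller inaccuracies in your write-up: the statement ``each Stein filling $W$ arises from Legendrian surgery on a Legendrian realisation of the link dual to $P$'' is stronger than what the classification gives (and not true in general --- there may be exotic fillings); the classification speaks of contact structures, not fillings. And a Gromov--McDuff type uniqueness argument of the kind you sketch would require control over the closed symplectic manifold obtained by capping off, which is not readily available here; in any case it would prove more than you need and is not how the paper proceeds.
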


Here we use the notation $M(e_0; r_1, r_2, r_3)$ for small Seifert fibered spaces;
$e_0 \in \ZZ$, $r_i \in (0,1)\cap \QQ$, and the space is given by the surgery diagram in Figure~\ref{seifert-space}.
Contact structures on these spaces  were extensively studied (see e.g.~\cite{Mat, Tos} and references therein). Many of them 
 turn out to be planar: every contact structure on $M(e_0; r_1, r_2, r_3)$ is planar if $e_0 \leq -3$~\cite{Scho}, and the same is true for $M(e_0; r_1, r_2, r_3)$ for $e_0\geq -1$ whenever this manifold is an L-space~\cite{LS3}.
Corollary~\ref{seifert} contrasts these planarity results.

\begin{figure}[ht]
\labellist
\pinlabel $e_0$ at 0 140
\pinlabel $-\frac1{r_1}$ at 58 92
\pinlabel $-\frac1{r_2}$ at 115 92
\pinlabel $-\frac1{r_3}$ at 171 92
\endlabellist
\includegraphics[scale=0.5]{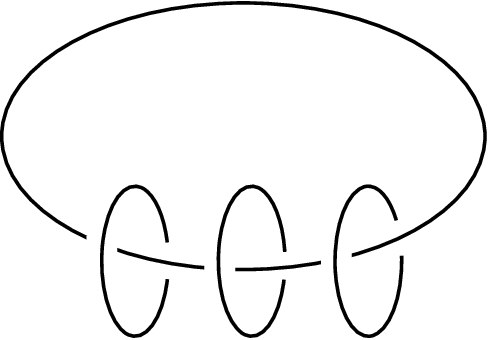}
\caption{The Seifert fibered space $M(e_0; r_1, r_2, r_3)$.}
\label{seifert-space}
\end{figure}

Our results stated above are special cases of a rather general examination of the homology of possible Stein fillings.
The major tool comes from Wendl's theorem, saying that any Stein filling of a planar contact manifold admits a Lefschetz fibration with the same planar fiber, and whose vanishing cycles can be obtained by a positive factorization of the monodromy of the planar open book into Dehn twists~\cite{We}.
(In particular, all curves along which Dehn twists are perfomed are homologically essential, which for planar pages is equivalent to being homotopically essential.)
Note that Wendl's theorem is extended to show that any minimal weak symplectic filling of a planar contact structure is deformation equivalent to a Stein filling admitting a Lefschetz fibration with the same properties~\cite{We2}.
Given a Lefschetz fibration, we can compute the homology and intersection form of the filling from the factorization of monodromy; 
together with the examination of all possible fillings, this leads to proofs of Theorems~\ref{sing} and~\ref{general}.  

In the proof of Theorem~\ref{sing2}, we consider the plumbing of {\em symplectic} surfaces and 
make use of the following result, which may be of independent interest. 

\begin{theorem}\label{positivegenus}
If a contact $3$-manifold $(Y,\xi)$ has a symplectic filling containing a symplectic surface of positive genus, it is not planar.
\end{theorem}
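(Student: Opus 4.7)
The plan is to assume for contradiction that $(Y,\xi)$ is planar while carrying a symplectic filling $W$ that contains a closed connected symplectic surface $\Sigma\subset\operatorname{int}(W)$ of genus $g\geq 1$. First I would invoke Wendl's theorem~\cite{We} and its extension to weak fillings in~\cite{We2}: the minimal model $W_0$ of $W$ admits a Lefschetz fibration $\pi_0:W_0\to\DD^2$ with planar fiber $F$ compatible with the open book on $Y$. Composing with the symplectic blow-down $\beta:W\to W_0$ produces a map $\pi:=\pi_0\circ\beta:W\to\DD^2$ whose generic fibers are still planar pages and whose singular fibers are nodal planar surfaces, possibly with some exceptional $(-1)$-spheres from the blow-ups. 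Compatibility with the open book on $\partial W$ forces $\pi^{-1}(\partial\DD^2)\subset\partial W$, so $\Sigma$ is automatically disjoint from $\pi^{-1}(\partial\DD^2)$ and thus $\pi(\Sigma)\subset\operatorname{int}(\DD^2)$.

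After a $C^\infty$-small symplectic perturbation of $\Sigma$, I may assume $\Sigma$ is transverse to every regular fiber $F_t$. Since both $\Sigma$ and $F_t$ are symplectic, at each transverse intersection point the tangent planes $T_p\Sigma$ and $T_pF_t$ are $\omega$-positive $2$-planes spanning $T_pW$ with positive symplectic orientation, so each local intersection contributes $+1$; hence $d:=[\Sigma]\cdot[F]=|\Sigma\cap F_t|$ is a well-defined non-negative integer, independent of the regular value $t$. I then split into two cases. If $d=0$, then $\Sigma$ is disjoint from every regular fiber, hence contained in the union of singular fibers, and by connectedness in a single $\pi^{-1}(t_0)$; a local normalization at each node shows that $\Sigma$, viewed in the normalization of $\pi^{-1}(t_0)$, is a union of smooth components of that nodal fiber, each either a planar page fragment with non-empty boundary on $\partial W$ or an exceptional $(-1)$-sphere. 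Neither option yields a closed connected surface of positive genus. If $d>0$, then $\pi(\Sigma)$ contains every regular value of $\pi$ in $\operatorname{int}(\DD^2)$, hence a dense subset of $\operatorname{int}(\DD^2)$; but $\pi(\Sigma)$ is also compact, hence closed in $\operatorname{int}(\DD^2)$, so it equals $\operatorname{int}(\DD^2)$, contradicting compactness.

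The hard part will be the local analysis at the nodes: verifying that a smoothly embedded closed surface sitting inside a nodal fiber of a (possibly blown-up) planar Lefschetz fibration actually decomposes, via local normalization at each node, into smooth components of that fiber, each of which is either planar-with-boundary or a closed $(-1)$-sphere. Once this is in place, the conceptual crux is the tension between positivity of intersections for symplectic surfaces and the fact that fibers of a planar Lefschetz fibration are never closed.
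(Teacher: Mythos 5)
Your argument takes a genuinely different, more geometric route than the paper's (which computes $\langle c_1, A\rangle$ and $A\cdot A$ for the Lefschetz fibration via Propositions~\ref{c1} and~\ref{iform} and feeds them into adjunction to derive $\sum(b_j+b_j^2)\le 0$). Unfortunately there is a genuine gap at the very start of your intersection-positivity step. Wendl's theorem produces a Lefschetz fibration on $W_0$ only after a \emph{deformation} of symplectic structures, from the given $\omega'$ to some $\omega$ whose fibers are $\omega$-symplectic. Your surface $\Sigma$ is $\omega'$-symplectic by hypothesis, but there is no reason it remains $\omega$-symplectic after the deformation: the cohomology class of the form changes, and a deformation is not an isotopy. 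Without $\Sigma$ and the fibers $F_t$ being symplectic for the \emph{same} form (equivalently, both pseudoholomorphic for a common compatible $J$), you cannot invoke positivity of intersections. In that case $[\Sigma]\cdot[F]=0$ (which is indeed forced homologically, since the page over a boundary point is nullhomologous in $H_2(W,\partial W)$) gives you nothing geometric: $\Sigma$ could meet a fiber in cancelling pairs of points, and your dichotomy $d=0$ versus $d>0$ collapses. The paper's proof is designed precisely to sidestep this: it only ever uses the cohomology class $c_1(J)$ and the intersection pairing, both of which are unchanged along the deformation ($c_1$ because the space of compatible $J$'s is connected, so $c_1(J)=c_1(J')$), together with adjunction for the \emph{original} $\omega'$-symplectic $\Sigma$. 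Nothing in the paper's argument requires $\Sigma$ to survive as a symplectic submanifold for $\omega$.

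Two secondary remarks. First, your ``$d>0$'' branch is in fact vacuous, since as noted $[F_t]$ is trivial in $H_2(W,\partial W)$ for $t\in\partial\DD^2$ and therefore for all $t$; so the entire weight of the proof rests on ruling out $d=0$, which is exactly where the positivity input was needed. Second, even granting symplecticity, the claim ``$\Sigma$ is transverse to \emph{every} regular fiber'' cannot be achieved by a perturbation (a compact $\Sigma$ must have critical values of $\pi|_\Sigma$); what you actually want is local positivity of intersections for tangencies, which does hold for symplectic surfaces sharing a compatible $J$ -- but again this is exactly the input the deformation undermines. If you want a geometric proof along your lines, you would need a strengthening of Wendl's theorem that produces a Lefschetz fibration whose fibers are symplectic for the original $\omega'$, or an argument that the class $A=[\Sigma]$ remains represented by an $\omega$-symplectic surface after the deformation; neither is available off the shelf.
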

One could be tempted to prove this result by removing a tubular neighbourhood of the surface and invoking Etnyre's result on the nonexistence of symplectic semi-fillings of planar contact structure; see \cite[Theorem~1.2]{Etn}. However, a tubular neighbourhood of a symplectic surface has a concave boundary (and therefore, its complement has a convex boundary) if and only if its self-intersection is positive. This case is already ruled out by the negative definiteness of any filling of a planar contact structure (also from \cite[Theorem~1.2]{Etn}), while in the case of negative self-intersection the result is genuinely new.

As an immediate corollary of Theorem \ref{positivegenus}, we prove non-planarity of 
 Boothby--Wang contact structures for $g>0$. 
Given two integers $g\ge 0$ and $b>0$, we denote by 
$Y_{g,b}$ the total space of the circle bundle with Euler number $-b$ over a closed surface of genus $g$. Let $\xi_{g,b}$ be the Boothby--Wang contact structure on $Y_{g,b}$,
i.e. the contact structure induced on the (convex) boundary of the symplectic disk bundle over a surface of genus $g$, and Euler number $-b$;
see, for instance,~\cite{GayMark}. Since the 0-section of the disk bundle is symplectic,  we have 

\begin{cor}\label{c:prequantization}
Let $\xi_{g,b}$ be the Boothby--Wang contact structure on the circle bundle $Y_{g,b}$ with 
$b >0$. Then $\xi_{g,b}$ is planar if and only if $g = 0$.
\end{cor}

Let $\Sigma_{g,b}$ be the surface of genus $g$ with $b$ holes, and $\tau_{\partial}$ the boundary multi-twist, i.e. the product of right-handed Dehn twists along each boundary component. Then $\xi_{g,b}$ is supported by the open book $(\Sigma_{g,b}, \tau_{\partial})$, and thus Corollary \ref{c:prequantization} extends a partial result of Wand~\cite[Corollary 7.6]{Wand}.

It is useful to compare our obstructions to previous results. As noted above, Etnyre proved that any symplectic 
filling of a planar contact structure is negative definite, and that for a planar integral homology sphere, 
any symplectic filling must have a diagonal intersection form~\cite{Etn}.
This implies, for example, that the canonical contact structure on the Poincar\'e homology sphere 
(the link of the $E_8$-singularity) is not planar.
In fact, one can observe that Etnyre's proof yields a stronger statement: 
the intersection form of any symplectic filling of a planar \emph{rational} homology sphere embeds 
in a diagonal lattice of some (possibly higher) rank.
It follows that the canonical contact structures on the links of the $E_6$- and $E_7$-singularities cannot be planar, either.
On the other hand, Etnyre's result gives no information for the links of the $D_n$-singularities, 
as the corresponding intersection forms embed in the standard lattice. 

Another obstruction to planarity, in terms of Heegaard Floer homology, was developed by 
Ozsv\'ath--Stipsicz--Szab\'o~\cite{OSS}.
This obstruction is also trivial for the links of the $D_n$-singularities.
More generally, the Heegaard Floer obstruction is always trivial for L-spaces.
By contrast, our obstruction often gives non-trivial information in the case of L-spaces, see Corollary~\ref{seifert} above.

Using factorizations of mapping classes, Wand gave another obstruction to planarity~\cite{Wand}. Wand's results are closer in
spirit to ours, as he also uses Wendl's theorem and examines topology of fillings, however both the specific approach and 
the obstruction Wand obtains are different from ours. In particular, Wand shows that the sum of the Euler characteristic and 
signature is the same for all Stein fillings of a planar contact manifold.
Then, if one is able to find two weak symplectic fillings $W_1$, $W_2$ for $(Y, \xi)$ such that $\chi(W_1)+\sigma(W_1) \neq  \chi(W_2)+\sigma(W_2)$, it follows that $(Y, \xi)$ cannot be planar.
(Wand also examines how certain relators in the mapping class group affect $\chi+\sigma$.)
However, this obstruction fails to address the case when there is a unique filling; 
for example, it is known that the filling is unique for the links of the $D_n$-singularities~\cite{OhtaOno}, so Wand's approach gives no obstruction. 
Wand's obstruction is also trivial when the underlying contact 3-manifold is a rational homology sphere, and all its fillings are negative definite (this is true, in particular, for all L-spaces);
indeed, for a negative definite Stein filling $W$ of a rational homology sphere we always have $\chi(W)+\sigma(W)=1$ since $b_3(W)=b_1(W)=0$.
We are also able to answer a question of Wand in our Corollary~\ref{c:prequantization}, proving non-planarity for a family of contact structures that cannot be handled by Wand's means (see~\cite[Corollary 7.6]{Wand} and subsequent discussion).

As a byproduct of our intersection form calculation, we also get the following corollaries.
These were first proven by Oba~\cite[Lemma 3.1, Lemma 3.2]{Oba2} using the Heegaard Floer obstruction from~\cite{OSS}. It is 
instructive to obtain these results more directly, from the basic topology of fillings.  

\begin{cor}\label{blow-down}
Let $(W,\omega)$ be a weak symplectic filling of a planar contact manifold $(Y, \xi)$.
If $B \in H_2(W)$ is a class of square $-1$, $B$ is represented by an embedded symplectic sphere that can be blown down. 
\end{cor}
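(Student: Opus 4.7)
The idea is to combine Wendl's structural theorem for weak fillings of planar contact manifolds~\cite{We2} with the intersection form calculation developed earlier in the paper. By~\cite{We2}, up to symplectic deformation $(W,\omega)$ is a Lefschetz fibration $\pi\colon W\to D^2$ with planar fiber $F$ and vanishing cycles $c_1,\dots,c_n$ positively factoring the monodromy of the supporting open book. The handle decomposition induced by $\pi$ presents $H_2(W)$ as the kernel of the map $\ZZ^n\to H_1(F)$, $(a_i)\mapsto\sum a_i[c_i]$; geometrically, each 2-cycle is a collection of planar subsurfaces of $F$ bounded by the selected vanishing cycles and capped off by the corresponding Lefschetz thimbles.

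From the intersection form analysis developed for Theorem~\ref{general} (and used in Theorems~\ref{sing}--\ref{sing2}), the intersection form of $W$ embeds isometrically into a negative definite diagonal lattice $(\ZZ^N,-\id)$ with a standard basis whose vectors are explicitly realized by embedded symplectic spheres coming from the Lefschetz fibration data. Now if $B\in H_2(W)$ has $B^2=-1$, its image in $(\ZZ^N,-\id)$ is a class of square $-1$, and the only such classes in a negative definite diagonal lattice are $\pm e_i$. Tracing this basis vector back to its geometric origin via the Lefschetz fibration, $B$ is realized by a vanishing cycle that bounds a disk in $F$ closed up with the corresponding Lefschetz thimble, giving an embedded symplectic sphere of self-intersection $-1$. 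McDuff's blow-down theorem (in the version for weak fillings) then contracts this exceptional sphere, and the resulting manifold remains a weak symplectic filling of $(Y,\xi)$.

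The main obstacle is the final geometric identification step: given only the algebraic information that $B$ maps to $\pm e_i$, one needs to know that $e_i$ corresponds to an honestly embedded symplectic sphere with the correct framing, rather than some higher-genus or nodal symplectic surface. This requires revisiting the proof of the intersection form embedding, matching each lattice basis vector with its geometric representative, and checking that these representatives are precisely the exceptional symplectic spheres arising from nullhomotopic vanishing cycles in the positive factorization.
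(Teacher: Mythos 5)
Your proposal runs into a fundamental obstacle at the very first step. Wendl's theorem for weak fillings (\cite{We2}) states that a \emph{minimal} weak filling of a planar contact manifold is deformation equivalent to a Lefschetz fibration with planar fiber; it says nothing about non-minimal fillings. You apply it directly to $W$, but if the corollary is correct, a filling with a class of square $-1$ is \emph{never} minimal, so the hypothesis fails exactly when the corollary has content. This is not a technicality you can route around: in any Lefschetz fibration produced by Wendl's theorem, all vanishing cycles are homologically essential in the page (this is built into the setup; see the standing assumption before Proposition~\ref{iform}), and Corollary~\ref{lessthan1} shows precisely that such a fibration has \emph{no} class of square $-1$. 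So your final step --- tracing the lattice vector $\pm e_i$ back to ``a vanishing cycle that bounds a disk in $F$'' --- is looking for a configuration that is explicitly excluded; the exceptional sphere you hope to find simply does not live inside the Lefschetz fibration.

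The paper's proof inverts the logic: it uses the \emph{absence} of $-1$ classes in the minimal case as a contradiction engine, rather than trying to locate the $-1$ class geometrically. Concretely: if $W$ were minimal, Corollary~\ref{lessthan1} (via Wendl) would forbid $B^2=-1$, so $W$ is not minimal and contains an exceptional symplectic sphere $E$. If $[E]=B$ we are done. If not, a short computation shows negative definiteness forces $E\cdot B=0$, so $B$ survives (with the same square) in the blowdown along $E$. Iterating, either $B$ eventually coincides with an exceptional class --- giving the desired symplectic sphere --- or one reaches a minimal weak filling $W_0$ still carrying a $-1$ class, contradicting Corollary~\ref{lessthan1} again. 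Note also that once $B$ is identified with an exceptional class, the blow-down is just the standard symplectic blow-down of that sphere; no appeal to McDuff's structure theorems is needed. Your intersection-form computation in the diagonal lattice is correct as far as it goes, but it is a restatement of Corollary~\ref{lessthan1}, not a mechanism for producing the sphere; the inductive blowdown argument is the missing idea.
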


\begin{cor}\label{IHS}
Let $Y$ be an integral homology sphere, equipped with a planar contact structure 
$\xi$. Then any minimal weak symplectic filling of $(Y,\xi)$ is an integral homology ball.
\end{cor}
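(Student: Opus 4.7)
The plan is to combine the Lefschetz fibration structure coming from Wendl's theorem with the intersection form constraint of Etnyre and the blow-down statement of Corollary~\ref{blow-down}. Let $W$ denote a minimal weak symplectic filling of the planar integral homology sphere $(Y,\xi)$.

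First I would invoke Wendl's theorem, extended to weak fillings in~\cite{We2}: $W$ is deformation equivalent to a Lefschetz fibration over $D^2$ with planar fiber $F$. This endows $W$ with a handle decomposition consisting of a single $0$-handle, some $1$-handles coming from the $1$-handles of the page $F$, and one $2$-handle for each vanishing cycle, with no handles of index $3$ or $4$. It follows at once that $H_3(W)=H_4(W)=0$, and moreover $H_2(W)$ is torsion-free, since cellularly it is the kernel of the boundary map $\partial_2$, hence a subgroup of a free abelian group.

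Next I would apply Etnyre's theorem for planar integral homology spheres, which forces the intersection form of $W$ to be the standard negative diagonal $-I_n$, where $n=b_2(W)$. Each generator has square $-1$, so by Corollary~\ref{blow-down} each is represented by an embedded symplectic sphere that can be symplectically blown down. Minimality of $W$ then forces $n=0$, and since $H_2(W)$ is torsion-free this gives $H_2(W)=0$ outright. To promote the vanishing of $H_2$ to the vanishing of $H_1$, I would use the long exact sequence of the pair $(W,Y)$ together with Poincar\'e--Lefschetz duality: since $H_1(Y)=0$ and $H_0(Y)\to H_0(W)$ is an isomorphism, the sequence gives $H_1(W)\cong H_1(W,Y)\cong H^3(W)$. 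By the universal coefficient theorem, $H^3(W)$ is determined by $H_3(W)$ and the torsion of $H_2(W)$, both of which vanish. Hence $H_1(W)=0$, and $W$ has the integral homology of a point.

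I do not expect any individual step to be a real obstacle, as the key tools are already established in the earlier parts of the paper; the main content is to combine them. The only mild point of care is that Etnyre's diagonalisation should apply to a weak, not merely a strong, filling, but this is harmless: $Y$ being an integral homology sphere lets one deform any weak filling to a strong one.
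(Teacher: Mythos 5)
Your proposal is correct and follows essentially the same strategy as the paper: invoke Etnyre's diagonalization for planar integral homology spheres to force the intersection form into $-I_n$, conclude $n=0$ from the non-existence of $(-1)$-classes, and then use the Lefschetz fibration handle structure to kill $H_1$. The two small routing differences are cosmetic: the paper invokes Corollary~\ref{lessthan1} directly (no class of square $-1$) where you pass through Corollary~\ref{blow-down} and minimality, and the paper kills $H_1(W)$ via the surjection $H_1(Y)\to H_1(W)$ coming from the absence of $3$- and $4$-handles, whereas you route the same input through Poincar\'e--Lefschetz duality and the universal coefficient theorem; both are valid and rely on the identical structural fact about the handle decomposition.
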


Non-trivial examples of fillings as in Corollary~\ref{IHS} do exist; a number
of examples were constructed by Oba~\cite{Oba}. More generally, we show that
one can construct Stein fillings with prescribed fundamental
groups.

\begin{prop}\label{p:ballgroups1}
Every finitely presented group is the fundamental group of the total space of a Lefschetz fibration over the disk with planar fibers.
\end{prop}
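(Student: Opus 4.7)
The plan is to realize any finitely presented group $G = \langle g_1,\ldots,g_n \mid r_1,\ldots,r_k\rangle$, with each relator written as a word $r_i = g_{j(i,1)}^{\eps(i,1)}\cdots g_{j(i,m_i)}^{\eps(i,m_i)}$, as the fundamental group of a Lefschetz fibration $\pi\colon W\to D^2$ with planar fiber $F$. Once such a fibration is built, $W$ admits a handle decomposition obtained from $F\times D^2$ by attaching a $2$-handle along each vanishing cycle $\gamma_i\subset F$, so that van Kampen gives $\pi_1(W)\cong\pi_1(F)/\langle\langle\gamma_1,\ldots,\gamma_N\rangle\rangle$. The whole problem thus reduces to choosing a planar surface $F$ together with an ordered tuple of essential simple closed curves $\gamma_1,\ldots,\gamma_N\subset F$ so that this quotient is isomorphic to $G$.

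The essential difficulty is that a simple closed curve on a planar surface represents a very restricted element of the free group $\pi_1(F)$: up to conjugation and inversion, it is a cyclically ordered product of the boundary-loop generators corresponding to some proper non-empty subset of the boundary components, all with the \emph{same} sign. To circumvent this restriction, the idea is to enlarge $F$ with auxiliary boundary components and use short ``identification'' vanishing cycles that pin auxiliary generators to the desired signed generators, before imposing each relator with one further vanishing cycle.

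Concretely, I would take $F$ planar with boundary components $c_1,\ldots,c_n$ (loops $x_j$) for the generators, one component $d_{i,\ell}$ (loop $y_{i,\ell}$) for each letter of each relator, one extra component $e_{i,\ell}$ (loop $z_{i,\ell}$) for each letter with positive exponent, and one final ``outer'' boundary. For each letter with $\eps(i,\ell)=-1$, I include one vanishing cycle enclosing exactly $\{d_{i,\ell},c_{j(i,\ell)}\}$, imposing $y_{i,\ell}x_{j(i,\ell)}=1$, i.e.\ $y_{i,\ell}=x_{j(i,\ell)}^{-1}$. For each letter with $\eps(i,\ell)=+1$, I include two vanishing cycles enclosing $\{d_{i,\ell},e_{i,\ell}\}$ and $\{e_{i,\ell},c_{j(i,\ell)}\}$, which together force $y_{i,\ell}=x_{j(i,\ell)}$. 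Finally, for each relator $r_i$, I include one further vanishing cycle $\delta_i$ enclosing $\{d_{i,1},\ldots,d_{i,m_i}\}$ in the appropriate cyclic order, imposing $y_{i,1}\cdots y_{i,m_i}=1$. Substituting the identifications turns this into $g_{j(i,1)}^{\eps(i,1)}\cdots g_{j(i,m_i)}^{\eps(i,m_i)}=1$, which is precisely $r_i=1$. Tietze-eliminating the auxiliary generators then leaves the original presentation of $G$ on the $x_j$.

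The hard part, which I expect to be routine but detail-heavy, is the bookkeeping: laying out all these curves simultaneously on a single planar picture so that each $\gamma$ encloses exactly the prescribed set of boundary components; checking that each $\gamma$ is essential (automatic, since the outer boundary guarantees that every $\gamma$ encloses a proper non-empty subset of the boundaries); and fixing orientation conventions so that the actual words read off in $\pi_1(F)$ from each separating curve agree on the nose with those claimed above. Once these conventions are pinned down, the construction is entirely combinatorial and assembles into a Lefschetz fibration over $D^2$ with planar fiber whose total space $W$ has $\pi_1(W)\cong G$.
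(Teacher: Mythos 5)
Your construction is correct and is essentially the paper's approach: both circumvent the restriction that simple closed curves on a planar surface represent only positive, multiplicity-free, cyclically-ordered words in the boundary generators by introducing auxiliary holes together with short two-hole vanishing cycles that identify each auxiliary generator with the desired signed original generator, after which one long cycle per relator imposes the relation. The paper isolates the combinatorics in Lemma~\ref{l:goodpresentation}, which Tietze-transforms any presentation into one with properties (p), (r), (c) while preserving deficiency (something your bookkeeping also achieves implicitly), and then realizes the resulting relators as simple closed curves on an $n$-holed disk exactly as you do.
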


A more precise version of this statement, yielding also examples for Corollary~\ref{IHS}, is given in Proposition~\ref{p:ballgroups}. Note that Proposition~\ref{p:ballgroups1} is similar to a theorem of Amor\'os, Bogomolov, Katzarkov, and Pantev~\cite{ABKP} and to Gompf's theorem~\cite{Go}:
Gompf showed that any finitely presented group is the fundamental group of a closed symplectic 4-manifold, and in~\cite{ABKP},  a closed symplectic 4-manifold with prescribed fundamental group is constructed as a symplectic Lefschetz fibration over a closed surface. 
Unlike~\cite{Go, ABKP}, where no bounds are given for the genus of the fiber, we work with manifolds with boundary but restrict to Lefschetz fibrations with planar fibers.

\subsubsection*{Organization:}
In Section~\ref{compute}, we explain how to compute 
the intersection form and first Chern class of the filling 
constructed from a positive factorization of the monodromy of a planar open book, 
and prove Corollaries~\ref{blow-down} and~\ref{IHS}, and Theorem~\ref{positivegenus}.
In Section~\ref{links}, we prove Theorem~\ref{sing} (after considering the key example of $D_4$).
In Section~\ref{therest}, we prove Theorem~\ref{general}
and Corollary~\ref{seifert}.
In Section~\ref{normal-links}, we prove Theorem~\ref{sing2}.
Finally, in Section~\ref{pi1} we discuss fundamental groups and prove Proposition~\ref{p:ballgroups1}.
 
\subsubsection*{Acknowledgements:}
The authors would like to thank the Isaac Newton Institute for Mathematical Sciences for support and hospitality during the programme ``Homology theories in low dimensional topology'' when this work was undertaken.
This work was supported by EPSRC grant no EP/K032208/1.
OP is grateful to Andr\'as Stipsicz for a helpful conversation that led to Corollary~\ref{IHS};
MG thanks Cagri Karakurt, Andr\'as N\'emethi, Burak Ozbagci, Matteo Ruggiero, and Laura Starkston for interesting discussions and for giving many relevant references;
we also thank J\'ozsef Bodn\'ar for some motivating discussions.
PG and MG are partially supported by the ANR project QUANTACT (ANR16-CE40-0017).
MG was also supported by the Alice and Knut Wallenberg foundation and by the European Research Council (ERC) under the European Unions Horizon 2020 research and innovation programme (grant agreement No 674978).
OP was partially supported by NSF grant DMS-1510091.

\section{Computing topological invariants of a planar Lefschetz fibration}\label{compute}

In this section, we explain how to compute the intersection form and the first Chern class for a Lefschetz fibration 
over a disk with planar fibers. The second homology classes of a Lefschetz fibration are given by certain linear combinations 
of the vanishing cycles, and both the intersection form and the evaluation of the first Chern class can be found 
directly in terms of the vanishing cycles.  This is a consequence of fairly straightforward topological considerations, and we think 
that these facts, especially Proposition~\ref{iform}, should be known to experts (see Remark~\ref{Auroux-rmk} below), 
but complete statements and the proofs seem to be absent from the literature.  

Let $P$ be the page of a planar open book decomposition of $Y$; $P$ is the disk $D$ 
with a few holes. In this paper, the 3-manifold $Y$ is always assumed to be oriented, and the open book decomposition we consider is compatible with a co-oriented
contact structure. An orientation of contact planes, together with an orientation of $Y$, determine the orientation of the binding of the open book and the 
orientation of the page $P$. 
Let us assume that the monodromy $\phi$ of the open book is the product of
positive Dehn twists about homologically non-trivial simple closed curves
$\alpha_1,\dots,\alpha_m$ in $P$ for some $m$.
Each curve $\alpha_i$ divides $P$ into two components, and we \emph{orient} it as the boundary of the region $A_i$ disjoint from $\partial D$.
With this orientation, $\alpha_i$ defines a class in $H_1(P)$, that we denote with $[\alpha_i]_P$.
For convenience, we will also assume that the $\alpha_i$ are smoothly embedded and that they intersect transversely.
This implies that the union of the $\alpha_i$ disconnects $P$ into finitely many connected components.
Unless otherwise stated, homology is taken with coefficients in $\ZZ$.

Let $W$ be the total space of a Lefschetz fibration over a disk $\D$, with planar fiber $P$. We assume that $W$, $P$, and $\D$ are compatibly oriented.
(If $\d W$ is equipped with a contact structure and an open book with fiber $P$ as above, the orientations of $W$ and $\D$ are uniquely determined.)
If $\D' \subset \D$ is a small disk that contains no critical points, then $W$ is obtained from $P \times \D'$ by attaching 2-handles to copies of the vanishing cycles contained in the vertical boundary $P\times \d\D'$ so that distinct handles are attached along knots contained in distinct fibers.

We first describe $H_2(W)$ and give a convenient way to visualize homology classes.
We use the exact sequence of the pair $(W, P \times \D')$; since $P \times \D'$ retracts onto $P$, we can replace the former with the latter:
\[
0 \longrightarrow H_2(W) \stackrel{j_*}\longrightarrow H_2(W, P) \stackrel{\d_*}\longrightarrow  H_1(P) \longrightarrow H_1(W) \longrightarrow H_1(W,P) = 0.
\]
The group $H_2(W, P)$ is freely generated by the cores of the attached
2-handles;
we can identify these generators with the vanishing cycles.
Next, $H_2(W)$ is isomorphic to $\Im j_*$, which in turn equals $\Ker \d_*$.
So $H_2(W)$ can be identified with null-homologous linear combinations of vanishing cycles (thought of as 1-chains in $P$). 

Further, in $H_1(P)$ a linear combination $b_1[\alpha_1]_P + b_2[\alpha_2]_P+\dots +b_m[\alpha_m]_P$ is null-homologous if and only if the total winding number at each hole of $P$ is zero.
Notice that the curves correspond to distinct vanishing cycles, but their homology classes may coincide.
In our setting, each $\alpha_i$ is a vanishing cycle, so it is a simple closed curve on the planar 
surface $P$; then, with the chosen orientation convention, each $\alpha_i$ has winding number $0$ or $1$ at each hole.

We denote by $[\alpha_\bb]_W$ the homology class in $H_2(W)$ corresponding to the linear combination $\alpha_\bb = \sum b_i\alpha_i$;
we stress that whenever we write $[\alpha_\bb]_W$ it will be implicit in the notation that $[\alpha_\bb]_P = \sum b_i[\alpha_i]_P = 0 \in H_1(P)$.
Consider the linear combination of $2$-chains $A_\bb = \sum b_i A_i$, where as above $A_i$ is the oriented region in $D$ with $\d A_i= \alpha_i$. 
While it is possible to compute the self-intersection already at this point, using transversality for singular chains,
we find it more satisfactory to represent homology classes by oriented embedded surfaces as follows.

Since $[\alpha_\bb]_P = 0 \in H_1(P)$, the 2-chain $A_\bb$ has multiplicity 0 near each boundary component of $P$. 
Let $\alpha_\bb$ be the boundary of $A_\bb$; we construct a surface corresponding to $\alpha_\bb$ as follows.
Let $\D''=\frac12 \D' \subset \D'$ be a smaller disk, and identify its boundary with $S^1\subset\CC$. 
Let $|\bb| = \sum |b_i|$, and consider $|\bb|$ fibers $P_1,\dots, P_{|\bb|}$ of $P\times \D'$
over the points $\eta_j = \exp(2\pi ji/|\bb|)$ in  $\partial \D''$. 
Rewrite the sum $\sum b_i A_i$ as $\varepsilon_1A_{i_1} + \dots + \varepsilon_{|\bb|}A_{i_{|\bb|}}$, where each $\varepsilon_i$ is $\pm1$.
Using the same notation, we will consider $A_{i_j}$ as a copy of the corresponding region in the fiber $P_j$ over $\eta_j$.   

Look at a hole $h$ of $P$.
We ignore all indices $i$ such that $\alpha_{i}$ has winding number 0 around $h$, since the corresponding 2-chain $A_i$ is disjoint from $h$.
Since the winding number of $\alpha_\bb$ around $h$ is 0, all other indices, considered with their multiplicity, can be paired up; 
more precisely, we can rewrite $A_\bb = A_{j_1}-A_{k_1} + \dots + A_{j_n} - A_{k_n} + A'$, where $A'$ is a 2-chain disjoint from the hole $h$. 
Note that as before, all  2-chains in the expression for $A_\bb$ are the regions in the disk $D$, oriented compatibly with $P$ if they come with a $+$ sign, 
and oppositely otherwise.   

Using a standard innermost argument and connecting the paired-up 2-chains by oriented tubes, we can actually tube 
away all intersections of $A_\bb$ with 
$h\times \partial\D''$ by adding cylinders that are parallel to $\partial h \times \partial\D''$ in 
$P \times \partial\D''$.  
The result is an oriented embedded surface in $P \times \D'$ whose boundary consists of a number of vanishing cycles.
In $W$, vanishing cycles are null-homologous, so they can be capped off with a Lefschetz thimble to make an oriented embedded closed surface in $W$ representing the homology class $[\alpha_\bb]_W$.
See Figure~\ref{surface}.

\begin{figure}[ht]
\centering
    \begin{subfigure}[t]{0.4\textwidth}
    \labellist
    \pinlabel $A_2$ at 120 120
    \pinlabel $A_3$ at 450 120
    \pinlabel $A_1$ at 450 450
    \endlabellist
        \includegraphics[width=\textwidth]{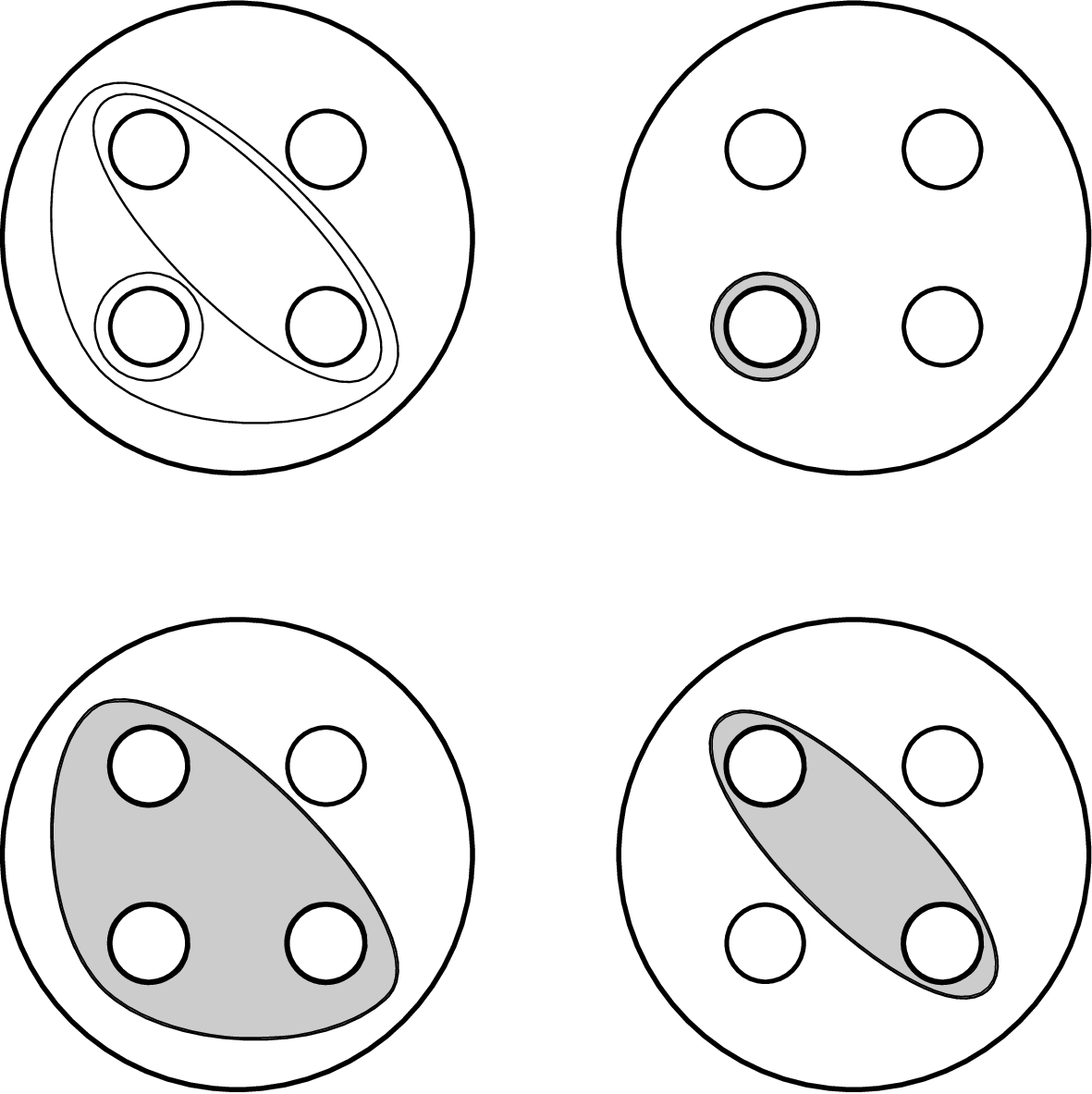}
        \caption{The three curves $\alpha_1, \alpha_2, \alpha_3$ and the corresponding regions $A_1, A_2, A_3$.}
        \label{f:H1LF1}
    \end{subfigure}
\hspace{0.1\textwidth}
    \begin{subfigure}[t]{0.4\textwidth}
    \labellist
    \pinlabel $\eta_2$ at 160 60
    \pinlabel $\eta_1$ at 270 110
    \pinlabel $\eta_3$ at 415 90
    \endlabellist
        \includegraphics[width=\textwidth]{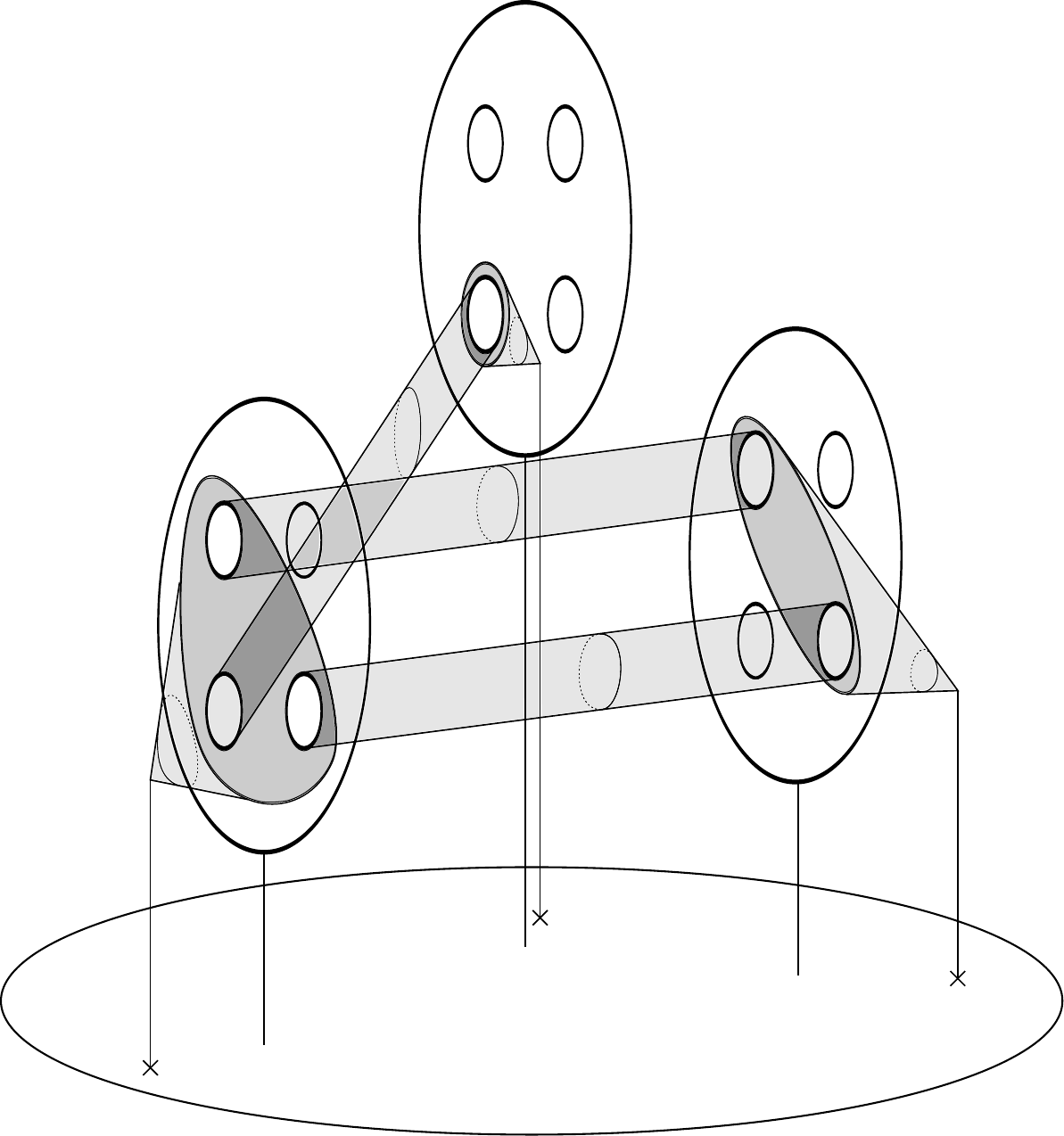}
        \caption{The surface representing the homology class: the three $\times$ denote the critical point of the projection associated to $\alpha_1, \alpha_2, \alpha_3$, and the corresponding cones are the three Lefschetz thimbles.}
        \label{f:H1LF2}
    \end{subfigure}
  \caption{Constructing an embedded surface representing the homology class $[\alpha_2-\alpha_1-\alpha_3]_W$.}
  \label{surface}
\end{figure}

Now we can determine the intersection form of $W$, by computing the intersection of two classes.
\begin{prop}\label{iform}
Consider two homology classes $B = [b_1\alpha_1 + \dots +b_m\alpha_m]_W$, $B'=[b'_1\alpha_1 + \dots +b'_m\alpha_m]_W$ in $H_2(W)$. 
Then 
\[
B \cdot B' = -(b_1b_1'+\dots+b_mb'_m).
\]
In particular, $B\cdot B = -(b_1^2+\dots+b_m^2)$.
\end{prop}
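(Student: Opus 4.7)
The plan is to follow the geometric construction already laid out in the section: represent each homology class in $H_2(W)$ by an explicit embedded closed surface, arrange these surfaces to be generic, and count intersections.

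First I would fix the data. Given $B$ corresponding to $\sum b_i \alpha_i$, I pick a small disk $\D'_B \subset \D$ disjoint from the critical values, mark $|\bb|=\sum|b_i|$ points $\eta_j$ on $\d\D''_B$, and take a copy of the $2$-chain $\pm A_{i_j}$ in each fiber $P_j$ over $\eta_j$ with signs matching the multiplicities $b_i$. The total chain has vanishing multiplicity near every hole of $P$, so by the standard innermost-circle/tubing argument described above I can resolve all multiplicities near the holes, obtaining a properly embedded surface $S_B \subset P\times \D'_B$ whose boundary on $P\times \d\D''_B$ consists of parallel copies of the various $\alpha_i$, each appearing $|b_i|$ times with signs matching $b_i$. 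Finally I cap off each boundary circle by the corresponding Lefschetz thimble of the associated critical point to obtain a closed surface representing $B$ in $W$. I do the same for $B'$ using a disjoint small disk $\D'_{B'}$.

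Second, I would compute $S_B \cdot S_{B'}$. Because $\D'_B$ and $\D'_{B'}$ are disjoint, the portions of the two surfaces lying over them do not meet, so all intersections occur between thimbles. The thimble capping off a copy of $\alpha_i$ is a cone on $\alpha_i$ inside a neighborhood of a single critical point; two thimbles from distinct critical points meet, generically, in the fiber containing the cone points, where the intersection number is computed by the algebraic intersection of the corresponding vanishing cycles on the page. Thus intersections split into two types: (i) a thimble capping $\alpha_i$ in $S_B$ meeting a thimble capping the same $\alpha_i$ in $S_{B'}$, contributing the self-intersection of the core of the $2$-handle which, by the Lefschetz framing convention, is $-1$; (ii) a thimble capping $\alpha_i$ and a thimble capping $\alpha_j$ with $i \ne j$, contributing $\alpha_i \cdot \alpha_j$ computed in the planar page $P$.

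Third, I would invoke planarity. Any two simple closed curves on the planar surface $P$ have algebraic intersection zero, so all the case (ii) contributions vanish. It remains to count case (i): for each index $i$, there are $|b_i|$ boundary circles of $S_B$ capped by the $\alpha_i$-thimble and $|b_i'|$ such boundary circles of $S_{B'}$, with signs prescribed by $b_i$ and $b_i'$. The algebraic count is $b_i b_i'$, each such pair contributing $-1$, giving $-\sum_i b_i b_i'$, which is exactly the claimed formula; the self-intersection formula follows by taking $B=B'$.

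The step most likely to require care is the second one: verifying that the cone-on-vanishing-cycle description of the thimbles actually yields the framing $-1$ in case (i) and $\alpha_i\cdot\alpha_j$ on the page in case (ii), with the correct signs relative to the orientations induced by the chains $A_i$. Once these local models are nailed down, the argument is purely combinatorial, and planarity enters only to kill the case (ii) terms, which is exactly why the statement fails in higher genus.
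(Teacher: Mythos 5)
Your proposal is correct and follows essentially the same approach as the paper's proof: represent each class by the explicit embedded surface built from $2$-chains $\pm A_{i_j}$ over marked points on a small disk, tube away the holes, cap with thimbles, place the two constructions over disjoint disks, and then observe that all intersections come from the caps, with type (i) contributing $-1$ per shared vanishing cycle and type (ii) vanishing because simple closed curves on a planar page have zero algebraic intersection.
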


\begin{proof}
We construct representatives of $B$ and $B'$ as above, starting with fibrations over disjoint small disks $\D'_1$ and  $\D'_2$ away from the critical points.
The parts of the surfaces contained in $P \times \D'_1$ resp. $P \times \D'_2$ are then disjoint, but 
intersections may appear after we cap off the vanishing cycles on the boundary of these surfaces.
This is schematically depicted in Figure~\ref{f:spiders}.
Intersections now come in two sorts: (i) the self-intersection of the cap (thimble) corresponding to the vanishing cycle $\alpha_i$, and (ii) the
intersection of the caps corresponding to the distinct vanishing cycles $\alpha_i$ and $\alpha_j$.
In case (i), this self-intersection equals, by construction, the framing of $\alpha_i$ along which the corresponding $2$-handle is attached, relative to the page $P$. 
A standard result in topology of Lefschetz fibrations (see e.g.~\cite[Section 8.2]{Gompf-Stipsicz}) says that for each critical point, the corresponding 2-handle is attached along the vanishing cycle with the framing $-1$ relative to the page framing.
This implies that the self-intersection of each thimble is $-1$.

In case (ii), the intersection of caps is given by the intersection of the curves $\alpha_i$ and $\alpha_j$ on the page $P$. Indeed, the cap for $\alpha_i$ is a thimble that connects a copy of the curve $\alpha_i$ in the fiber over some point $\eta_{s}$ with $i_s = i$ to the corresponding critical point in a singular fiber, see 
Figure~\ref{surface}.
The projection of this thimble to the base disk is a path from $\eta_{s}$ to the critical value in the disk. Similarly, the projection of the cap for $\alpha_j$ 
is a path from a point $\eta_{t}$ with $i_t = j$ to the critical value corresponding to the vanishing cycle $\alpha_j$, see Figure~\ref{f:spiders}. 
The two caps will be disjoint if these two paths are 
disjoint. If the paths intersect at a point $p \in \D$, the intersection of the two thimbles with the fiber $P_p$ over $p$ is given by curves isotopic to 
$\alpha_i$ resp. $\alpha_j$.  The intersection of caps is then given by the sum of contributions of all such points $p$, and each intersection point of the paths 
contributes  $[\alpha_i]_P \cdot [\alpha_j]_P$ to the sum. 
Since $P$ is planar, we see that $[\alpha_i]_P \cdot [\alpha_j]_P =0$ for the simple closed curves $\alpha_i$ and $\alpha_j$; thus, the total contribution is 0 in case (ii).
\end{proof}

\begin{figure}[h]
\includegraphics[scale=0.7]{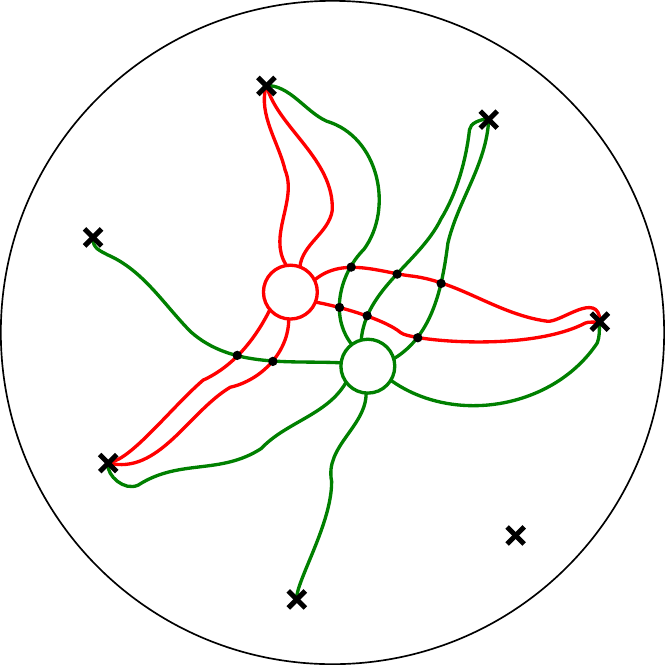}
\caption{The red and green figures are meant to represent two embedded surfaces constructed from two linear combinations of cycles as above. The tubes lie over the green and red circles.
Intersections happen over the regular fibers (dotted) and over the vanishing cycles (crossed).}\label{f:spiders} 
\end{figure}

\begin{remark}\label{Auroux-rmk}
In case (ii) we use planarity in an essential way; 
the formula would have additional terms for a higher-genus page.
A version of Proposition~\ref{iform} holds in the case of a higher-genus fiber, with a similar proof, but there are extra terms given by the intersections $[\alpha_i]_P \cdot [\alpha_j]_P$ that can be non-trivial in general.
This fact is mentioned, without proof, in the course of the proof of~\cite[Lemma~16]{Aur}.
We focused on the planar case since it is sufficient for our purposes, the statement is simpler, and the surface representatives are easier to visualize.
\end{remark}

\begin{cor}\label{lessthan1}
Let $W$ be a Stein filling of a planar contact $3$-manifold, and $B \in H_2(W)$ a non-zero homology class. 
Then $B \cdot B \leq -2$.
\end{cor}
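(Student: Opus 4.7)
The plan is to combine Wendl's theorem with Proposition~\ref{iform} via a short homological argument on planar surfaces.

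By Wendl's theorem, I first realise $W$ as a Lefschetz fibration over the disk with planar fiber $P$, whose vanishing cycles $\alpha_1,\dots,\alpha_m$ give a positive factorisation of the monodromy. Null-homotopic Dehn twists are trivial and can be removed from the factorisation, so I may assume that each $\alpha_i$ is essential in $P$. The next step is the elementary observation that on a planar surface, an essential simple closed curve is automatically non-trivial in $H_1(P)$: such a curve separates $P$ into two subsurfaces, and its homology class vanishes only if one of these subsurfaces is a disk. This places me in the framework of this section.

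Proposition~\ref{iform} then realises any non-zero class $B\in H_2(W)$ as a uniquely determined null-homologous integer combination $\alpha_\bb = b_1\alpha_1 + \dots + b_m\alpha_m$ with
\[
B \cdot B \;=\; -(b_1^2 + \dots + b_m^2).
\]
Since $B\neq 0$, the integer $\sum_i b_i^2$ is at least $1$. The crucial step is to rule out the case $\sum_i b_i^2 = 1$: in that situation exactly one $b_i$ would be $\pm 1$ with the rest zero, so $\alpha_\bb = \pm \alpha_i$ would itself be null-homologous in $P$, contradicting the homological non-triviality of $\alpha_i$ established above. Hence $\sum_i b_i^2 \geq 2$, which gives $B\cdot B \leq -2$.

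The main (and essentially only) obstacle is the elementary statement that no essential simple closed curve on a planar surface can be null-homologous; this genuinely uses planarity (it fails on higher-genus surfaces), and it is the second place --- after Proposition~\ref{iform} itself --- where the planar hypothesis enters in an essential way.
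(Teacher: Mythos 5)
Your proof is correct and follows essentially the same route as the paper's: identify $W$ with a planar Lefschetz fibration via Wendl's theorem, apply Proposition~\ref{iform} to write $B\cdot B = -\sum b_i^2$, observe that $\sum b_i^2 = 1$ would force $B$ to be represented by a single vanishing cycle $\pm\alpha_i$, and rule this out because $\alpha_i$ is homologically essential in $P$. The one small difference is that the paper builds the hypothesis that the $\alpha_i$ are homologically non-trivial into the setup of Section~\ref{compute}, while you supply the (correct) short justification that null-homotopic twists can be discarded and that on a planar surface an essential simple closed curve is automatically non-trivial in $H_1(P)$.
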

  
\begin{proof}
By Wendl's theorem~\cite{We}, $W$ corresponds to a factorization of the monodromy into positive Dehn twists along curves $\alpha_1,\dots,\alpha_m$.
We know by~\cite{Etn} that $B \cdot B < 0$.
The class $B$ is $[b_1\alpha_1 + \dots +b_m\alpha_m]_W$, where not all of the coefficients $b_i$ vanish.
Suppose that  $B\cdot B = -(b_1^2+\dots+b_m^2) = -1$, this implies that all $b_i$ vanish except for one, say $b_1 = \pm1$.
But this contradicts the fact that all vanishing cycles are homologically essential in $P$.
\end{proof}
         
We immediately get Corollaries~\ref{blow-down} and~\ref{IHS}. 
         
\begin{proof}[Proof of Corollary~\ref{blow-down}]
Let $W$ be a weak symplectic filling of a planar contact manifold $(Y, \xi)$.
Suppose that $B\cdot B=-1$ for a class $B \in H_2(W)$.
By~\cite{We2}, if $W$ were minimal, $W$ would be deformation equivalent to a Stein filling (given by a Lefschetz fibration with planar fibers), and so the previous corollary would give a contradiction.
Suppose now that $E$ is the homology class of a symplectic $(-1)$-sphere in $W$;
if $E = \pm B$ the proof is complete.
We claim now that, if $E\neq \pm B$, then $E\cdot B = 0$.
To this end, let $B\cdot E = x$, and look at the subspace of $H_2(W)$ generated by $E$ and $B$;
the intersection form of $W$, restricted to this subspace, is $\big({\tiny\begin{array}{cc}-1&x \\ x&-1\end{array}}\big)$; by~\cite{Etn}, this matrix has to be negative definite, and this can happen if and only if $x=0$.
It follows that $W$ can be blown down along a sphere in $E$, and that, in the blowdown, $B\cdot B = -1$.
By induction, we can blow down to a minimal weak filling $W_0$; since this can be deformed to a Stein filling, Corollary~\ref{lessthan1} now gives a contradiction.
\end{proof}
         
\begin{proof}[Proof of Corollary~\ref{IHS}]
Let $(W,\omega)$ be a minimal weak symplectic filling of $(Y,\xi)$.
By~\cite{We2}, $(W,\omega)$ can be deformed to a Stein filling; hence it has a handle decomposition with no 3-handles, so that $H_3(W) = 0$ and $H^3(W) \cong H_1(W,Y) = 0$.
Since $Y$ is an integral homology sphere, by Poincar\'e--Lefschetz duality the intersection form $Q$ of $W$ is unimodular.
From the long exact sequence of the pair $(W,Y)$, it also follows that $H_1(W) = 0$.
Again, by Poincar\'e--Lefschetz duality and the universal coefficient theorem, $H_2(W)$ is torsion-free.

By results of Etnyre~\cite{Etn}, since $Y$ is an integral homology sphere and $\xi$ has a planar open book decomposition, then the intersection form $Q$ of $W$ embeds in the negative definite diagonal lattice $\ZZ^N$ for some $N$.
Since $Q$ is unimodular, $Q$ is in fact a direct summand of $\ZZ^N$, and in particular it is itself diagonalizable.
Therefore, unless the filling is a rational homology ball, $H_2(W)$ must have a class with self-intersection $-1$, but this is not possible by the previous corollary.
\end{proof}

There are many examples of planar, fillable integral homology spheres that are
not contactomorphic to the standard tight $S^3$; we discuss these in
Section~\ref{pi1}.

We now turn to the calculation of the first Chern class $c_1(J)$ for a compatible almost-complex structure on the Lefschetz fibration. Although planarity is crucial in the next proposition, much of the proof follows the lines of the well-known calculation of $c_1$ for Stein domains corresponding to Legendrian surgeries~\cite[Proposition 2.3]{Gompf-handle}.   

\begin{prop}\label{c1}
Let $(Y,\xi)$ be the contact structure associated to the planar open book $(P, \phi)$. 
Let $(W,\omega)$ be the symplectic filling of $(Y,\xi)$ associated to the factorization of $\phi$ into 
positive Dehn twists along the curves $\alpha_1,\dots,\alpha_m$, oriented coherently with the outer boundary of $P\subset D^2$.
If $J$ is an almost-complex structure compatible with $\omega$, then
\[
\langle c_1(J), [b_1\alpha_1 + \dots + b_m\alpha_m]_W\rangle = b_1 + \dots + b_m.
\]
\end{prop}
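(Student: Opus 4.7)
The plan is to follow Gompf's obstruction-theoretic computation~\cite[Proposition~2.3]{Gompf-handle} of $c_1$ for Stein handlebodies, with planarity entering at the end via a rotation number calculation. Decompose $W = (P\times\D')\cup h_1\cup\dots\cup h_m$, where each $h_i$ is a 2-handle attached to a copy of the vanishing cycle $\alpha_i$ in a fiber of $P\times\partial\D'$. The page framing of $\alpha_i$ coincides with its contact framing (convex planar page), and a positive Lefschetz 2-handle is attached with framing $-1$ relative to the page, so each $h_i$ is attached as a Legendrian 2-handle in the Eliashberg--Gompf sense.

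Choose a trivialization $\tau$ of the complex line bundle $\det_{\CC}(TW,J)$ over $P\times\D'$; this exists because $TP$ is trivial ($P$ is an orientable surface with nonempty boundary) and $T\D'$ is trivial. Relative to $\tau$, the Chern class $c_1(J)$ lifts to a relative class $c_1(J,\tau)\in H^2(W, P\times\D';\ZZ)$, which is Poincar\'e--Lefschetz dual to a 2-chain supported in the handles. Gompf's formula identifies the coefficient of $h_i$ in this chain with the rotation number $r(\alpha_i)$ of $\alpha_i$ computed with respect to the boundary trivialization induced by $\tau$. Using the exact sequence from the beginning of Section~\ref{compute}, a class $B=\sum b_i\alpha_i\in H_2(W)$ lifts to $\sum b_i[h_i^{\mathrm{core}}]\in H_2(W,P\times\D')$, and naturality of the pairing gives
\[
\langle c_1(J), B\rangle = \sum_i b_i\, r(\alpha_i).
\]

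It remains to show $r(\alpha_i)=1$, and this is where planarity is essential. Choose $\tau$ so that on each fiber $P\times\{\mathrm{pt}\}$ it is induced by the standard complex trivialization of $TD$ (recall $P\subset D\subset\CC$), together with $\partial_z$ on $\D'$. Then $r(\alpha_i)$ is the winding number of the tangent vector of $\alpha_i$ in $\CC$; with the convention $\alpha_i=\partial A_i$ and $A_i$ the component disjoint from $\partial D$, the curve runs counterclockwise around a bounded region of $\CC$, giving winding $+1$. Hence $r(\alpha_i)=1$ and $\langle c_1(J),B\rangle = \sum b_i$. The main obstacle is the sign bookkeeping: pinning down that the combination of (i) page equals contact framing on a planar convex page, (ii) Lefschetz framing equals Legendrian framing, and (iii) the orientation convention $\alpha_i=\partial A_i$ together with the choice of $\tau$ all conspire to yield $+1$ rather than $-1$ for every rotation number.
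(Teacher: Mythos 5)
Your argument and the paper's are essentially the same through the reduction step: both follow Gompf's \cite[Proposition~2.3]{Gompf-handle}, trivialize the (determinant of the) complex tangent bundle over $P\times\D'$, and identify $\langle c_1(J), [\alpha_\bb]\rangle$ with $r\cdot(b_1+\dots+b_m)$ where $r$ is a single relative-rotation integer common to all handles (the same for every $\alpha_k$ by the symmetry of the trivialization), and both use planarity to see that $|r|=1$. Up to this point the proposal is sound.

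The gap is exactly the one you flag in your own last sentence: you have not actually established $r=+1$. You assert that $r(\alpha_i)$ equals the winding number of the tangent vector of the counterclockwise-oriented $\alpha_i\subset\CC$, hence $+1$ by the Umlaufsatz. But the quantity that Gompf's formula produces is a rotation number relative to the chosen trivialization and handle-gluing conventions, and identifying it with that tangent winding requires chasing several sign choices at once: the Legendrian realization of $\alpha_i$ on a convex page and the identification of page framing with contact framing; the Lefschetz $(-1)$-framing versus the $\tb-1$ Stein framing; the orientation $\alpha_i=\partial A_i$ (rather than the opposite); and whether the trivialization uses the inward or outward normal to $\partial\D'$. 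Any single one of these flipping gives $r=-1$, which would wreck the inequality driving Theorem~\ref{positivegenus}, so the sign is not a cosmetic issue. The paper deliberately sidesteps this: it first proves the \emph{shape} $\langle c_1(J),[\alpha_\bb]\rangle = r\sum b_i$ with $r=\pm 1$, and then \emph{calibrates} $r$ by a single model computation, namely evaluating $c_1$ against the symplectic $0$-section $S$ of the Euler-number-$(-3)$ disk bundle filling $(W,\omega)$ of $(L(3,1),\xi_0)$, whose class is $[\alpha_1-\alpha_2-\alpha_3]$; adjunction $\langle c_1(J),S\rangle = S\cdot S+\chi(S)=-1$ then forces $r=1$. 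You should either adopt such a calibration step or actually carry out the convention-tracking you have only gestured at; as written, the final step of your proof is a plausibility argument, not a proof.
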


Note that something similar follows by work of Gay--Stipsicz~\cite[Corollary 2.3]{GayStipsicz}; they observe that, up to deformation, $(W,\omega)$ embeds in the complement of a line in a blowup $X$ of $\mathbb{CP}^2$.
Therefore $H_2(W)$ embeds in the lattice generated by the homology classes of (some of) the exceptional divisors of $X$; the first Chern class evaluates as 1 on each of these divisors, thus recovering an analogue of Proposition~\ref{c1}.
However, there are examples of Stein 4-manifolds that admit such an embedding, but are nevertheless \emph{not} planar;
for instance, the following 4-manifold is realised as a subdomain in blowup of $\CC^2$, as the corresponding embedding shows, but the planarity of its boundary is excluded by Theorem~\ref{sing2}.
\[
\xygraph{
!{<0cm,0cm>;<1cm,0cm>:<0cm,1cm>::}
!~-{@{-}@[|(2.5)]}
!{(0,0) }*+{\bullet}="a1"
!{(1.5,0) }*+{\bullet}="a2"
!{(-1.06,1.06) }*+{\bullet}="c"
!{(-1.06,-1.06) }*+{\bullet}="d"
!{(0,0.4) }*+{-2}
!{(1.5,0.4) }*+{-2}
!{(-1.06,1.46) }*+{-3}
!{(-1.06,-1.46) }*+{-2}
"c"-"a1"
"a2"-"a1"
"d"-"a1"
} \qquad = \qquad 
\xygraph{
!{<0cm,0cm>;<1cm,0cm>:<0cm,1cm>::}
!~-{@{-}@[|(2.5)]}
!{(0,0) }*+{\bullet}="a1"
!{(1.5,0) }*+{\bullet}="a2"
!{(-1.06,1.06) }*+{\bullet}="c"
!{(-1.06,-1.06) }*+{\bullet}="d"
!{(0.2,0.4) }*+{\phantom{}_{e_3 - e_4}}
!{(1.5,0.4) }*+{\phantom{}_{e_2 - e_3}}
!{(-1.06,1.46) }*+{\phantom{}_{e_1-e_2-e_3}}
!{(-1.06,-1.40) }*+{\phantom{}_{e_4-e_5}}
"c"-"a1"
"a2"-"a1"
"d"-"a1"
}
\]

\begin{proof}
As before, the space $W$ is obtained from $P \times \D'$ by attaching 2-handles. 
The complex bundle $(TW, J)$ is trivial over $P \times \D'$, and $c_1(J)$ measures the obstruction to extending a trivialization over the 2-handles. 
We will argue that for each 2-handle, this obstruction is the same in the appropriate sense.
We can assume that the $2$-handles are attached to fibers of $P\times \D'$ over points in a small arc in $\partial \D'$.
Fix an embedding $P\subset \CC$ and trivialize the complex bundle $T(P \times \D')= TP \times T\D'$ over the chosen fibers by a framing $(u, v)$, where $u$ is a constant vector field in $P \subset \CC$ and $v$ is an inward normal to $\partial \D'$ in $\D' \subset \CC$.
This trivialization extends to a complex trivialization  of  $T(P \times \D')$ over the entire product $P \times \D'$.
Each 2-handle $H_k$ can be identified with a fixed copy of $D^2 \times D^2 \subset i\RR^2 \times \RR^2$, and we can pick a complex trivialization of its tangent bundle that restricts to the circle $S^1\times 0 \subset H_k$  as the framing $(\tau, \nu)$,  where $\tau$ is the tangent and $\nu$ the outward normal vector fields to $S^1 = \partial D^2 \subset i \RR^2$.
(Indeed, the framing $(\tau ,\nu)$ differs from the restriction of the product framing to $S^1$ by an element of $\pi_1(SU(2))$,  and so $(\tau ,\nu)$ can be extended over the entire handle since $SU(2)$ is simply connected, see~\cite[Proposition 2.3]{Gompf-handle}).
When we attach the handle by identifying $S^1 \times 0 \subset H_k$ with the vanishing cycle $\alpha_k$, $\nu$ is identified with $v$, and $\tau$ is 
identified with the tangent vector field to $\alpha_k$.
Therefore, $\nu$ and $v$ together span a trivial complex line bundle, and $c_1(J)$ equals the first Chern class of the complex line bundle defined by $\tau$ and $\mu$.
To evaluate the latter on the core of the handle $H_k$ (as a relative Chern class), we must look at the rotation number of $\mu$ relative to $\tau$ along the vanishing cycle $\alpha_k$ in the page $P$.
Since $P$ is planar and $\alpha_k$  is a simple closed curve in $P$, it is clear that this rotation number equals $r=\pm 1$.
(The sign depends on the orientation conventions).
Note also that the value of $r$ is the same for all handles, since the tangent bundles over different pages are identified by our choice of trivialization, and different vanishing cycles in the same page $P \subset \CC$ are identified via an isotopy in $\CC$.  
It follows that
\begin{equation}\label{e:c1partial}
\langle c_1(J), [b_1\alpha_1 + \dots + b_m\alpha_m]_W\rangle = r(b_1 + \dots + b_m),
\end{equation}
where $r=\pm 1$.

To pin down the sign, we consider the lens space $L(3,1)$.
The canonical contact structure $\xi_0$ on $L(3,1)$ is the Boothby--Wang structure associated to the disk bundle over $S^2$ with Euler number $-3$.
As mentioned in the introduction, $\xi_0$ is supported by the open book on the 2-holed disk $P$, where the monodromy $\phi$ is the multi-twist along the boundary;
more precisely, Gay and Mark~\cite{GayMark} show that this factorisation corresponds (up to deformation equivalence) to the symplectic disk bundle filling $(W,\omega)$ of $\xi_0$.
As above, call $J$ an almost-complex structure compatible with $\omega$.

Note that the 0-section of the disk bundle is a symplectic sphere $S$, whose homology class generates $H_2(W)$;
in particular, $S$ satisfies the adjunction formula, and the symplectic form integrates positively over it.

The class $[\alpha_1-\alpha_2-\alpha_3]_W$, where $\alpha_1$ is parallel to the outer boundary, generates $H_2(W)$.
Since the corresponding 2-chain is made by a part of the page (with positive orientation) and 
three vanishing cycles (where $\omega$ vanishes), the symplectic form integrates positively over this linear combination.

In particular, $S = [\alpha_1-\alpha_2-\alpha_3]_W$, with its symplectic orientation.
Applying the adjunction formula and~\eqref{e:c1partial}
\[
-r = \langle c_1(J), [\alpha_1-\alpha_2-\alpha_3]_W \rangle = \langle c_1(J), [S] \rangle = S\cdot S + \chi(S) = -1,
\]
hence $r=1$, as claimed.
\end{proof} 

We now use Propositions~\ref{iform} and~\ref{c1} to prove Theorem~\ref{positivegenus}.
\begin{proof}[Proof of Theorem~\ref{positivegenus}]
Suppose that $(W,\omega')$ is a symplectic filling that contains a symplectic surface of genus $g > 0$,
and call $A$ its homology class.
Let $J'$ be an almost-complex structure compatible with $\omega'$.
Without loss of generality, we may assume that $(W, \omega')$ is minimal; then
Wendl's theorem guarantees that there is a deformation from $(\omega',J')$
to $(\omega,J)$, such that $(W,\omega)$ is supported by a planar Lefschetz fibration, corresponding to a factorization of a planar monodromy $\phi$ of $\xi$ into positive Dehn twists along $\alpha_1,\dots,\alpha_m\subset P$.

Note that $\langle c_1(J),A\rangle = \langle c_1(J'),A\rangle$, since evaluation of $c_1(J)$ can only take discrete values.
Suppose $A = [b_1\alpha_1+\dots+b_m\alpha_m]_W$.
From Proposition~\ref{c1} we obtain:
\[
\langle c_1(J),A\rangle = b_1 + \dots + b_m.
\]
On the other hand, since $A$ is represented by an $\omega'$-symplectic surface of genus $g$, it satisfies the adjunction formula:
\[
\langle c_1(J'),A\rangle - A\cdot A = 2 - 2g \le 0.
\]
Putting everything together:
\[
\sum (b_j+b_j^2) = b_1 + \dots + b_m + b_1^2+\dots + b_m^2 \le 0.
\]
However, each of the summands on the left-hand side is non-negative, and evaluates to $0$ only if $b_j \in \{-1,0\}$ for each $j$.
If $g>1$, we are already done.
If $g=1$, the signs of all coefficients of $b_j$ agree, and therefore, by a winding number argument, $[b_1\alpha_1+\dots+b_m\alpha_m]_P$ cannot be zero in $H_1(P)$, as desired.
\end{proof}

The previous theorem rules out the presence of symplectic surfaces of genus $g>0$. 
Symplectic spheres can exist in a weak symplectic filling of a planar contact structure, and we will  now describe their homology 
classes  explicitly 
in terms of vanishing cycles of a Lefschetz fibration deformation equivalent to the given minimal symplectic filling. 

Let us set up some notation and terminology first.
We say that two curves $\alpha$ and $\alpha'$ in $P\subset D^2$ are \emph{separated}
if there is no hole in $P$ around which both $\alpha$ and $\alpha'$ have positive winding number.
(Equivalently, this means that $\alpha$ and $\alpha'$ are homologous to $\beta = \partial D$ and $\beta' = \partial D'$ such that $D$ and $D'$ are disjoint.)
We say that $\alpha$ \emph{dominates} $\alpha'$, and we write $\alpha \succ \alpha'$, if there is no hole in $P$ around which the winding number of $\alpha'$ is larger than the winding number of $\alpha$.
(Equivalently, this means that $\alpha$ and $\alpha'$ are homologous to $\beta = \partial D$ and $\beta' = \partial D'$ such that $D$ contains $D'$.)
Note that $\succ$ is \emph{not} a partial order on isotopy classes of curves, but rather it induces one on homology classes of embedded curves.

\begin{lemma}\label{spheres-coefficients}
Suppose that  $(W, \omega')$ is a minimal weak filling of a planar contact manifold $(Y, \xi)$, deformation equivalent to a Stein filling  $(W, \omega)$ supported by a planar Lefschetz fibration with vanishing cycles $\alpha_1,\dots,\alpha_m\subset P$.
If there is an embedded symplectic sphere in $(W, \omega')$ in the homology class $[b_1\alpha_1+\dots+b_m\alpha_m]_W$, then all coefficients $b_i$ are either $0$ or $\pm1$, and there is exactly one coefficient $+1$.
Without loss of generality, suppose that the homology class is $[\alpha_1-\alpha_2-\dots-\alpha_\ell]_W$;
then $\alpha_1 \succ \alpha_j$ for every $j=2,\dots,\ell$, and $\alpha_j$ and $\alpha_{j'}$ are separated for every $j\neq j'$ among $2,\dots,\ell$.
\end{lemma}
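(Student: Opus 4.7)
The plan is to combine the adjunction formula for an embedded symplectic sphere with the explicit formulas of Propositions~\ref{iform} and~\ref{c1}, and then exploit the constraint that $b_1\alpha_1 + \cdots + b_m\alpha_m$ be null-homologous in $H_1(P)$. Since $J$ and $J'$ are deformation-equivalent, $\langle c_1(J), A\rangle = \langle c_1(J'), A\rangle$ (as used in the proof of Theorem~\ref{positivegenus}), where $A$ denotes the homology class of the sphere. Combining the adjunction identity $\langle c_1(J'), A\rangle - A\cdot A = 2$ with $\langle c_1(J), A\rangle = \sum b_i$ and $A\cdot A = -\sum b_i^2$ yields
\[
\sum_{i=1}^m b_i(b_i+1) = 2.
\]
Each summand is a product of consecutive integers, hence a non-negative even integer, vanishing exactly when $b_i \in \{0,-1\}$ and at least $2$ otherwise. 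Thus exactly one index $i_0$ has $b_{i_0} \in \{1, -2\}$, and all other $b_i$ lie in $\{0, -1\}$.

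Next I would rule out the possibility $b_{i_0} = -2$. Because each $\alpha_i$ is a simple closed curve in the planar surface $P$, its winding number $w_i(h)$ about each boundary component $h$ of $P$ lies in $\{0,1\}$, and null-homology in $P$ is the system $\sum_i b_i w_i(h) = 0$ for each hole $h$. If $b_{i_0} = -2$, then after relabelling so that $i_0 = 1$, this sum becomes a combination $-2w_1(h) + \sum_{j:\, b_j = -1}(-1)w_j(h)$ of non-positive terms, which forces every term to vanish, and in particular $w_1(h) = 0$ for every $h$. This means $\alpha_1$ is null-homologous in $P$, contradicting the standing assumption that each vanishing cycle is homologically essential. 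Hence $b_{i_0} = 1$, which proves the first half of the lemma.

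Finally, relabelling so that $b_1 = 1$ and $b_2 = \cdots = b_\ell = -1$ are the nonzero coefficients, the null-homology condition reads $w_1(h) = w_2(h) + \cdots + w_\ell(h)$ at every hole $h$. Since $w_1(h) \in \{0,1\}$ and the other $w_j(h)$ are non-negative integers, for each $h$ either all the $w_j(h)$ vanish, or $w_1(h) = 1$ and exactly one $j \in \{2, \ldots, \ell\}$ satisfies $w_j(h) = 1$. In either case $w_j(h) \le w_1(h)$ for all $j \ge 2$, which is precisely $\alpha_1 \succ \alpha_j$; and in the second case no two distinct $\alpha_j, \alpha_{j'}$ with $j, j' \ge 2$ wind simultaneously about $h$, which is the separation statement. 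The only real subtlety is the step ruling out $b_{i_0} = -2$; beyond that the lemma is a direct combinatorial consequence of the $c_1$ and intersection-form formulas established in the two preceding propositions.
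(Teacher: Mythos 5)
Your proof is correct and follows essentially the same route as the paper's: the key equation $\sum_j b_j(b_j+1)=2$ comes from adjunction plus Propositions~\ref{iform} and~\ref{c1}, the case $b_{i_0}=-2$ is excluded because then all coefficients would be non-positive (so the winding-number identities force them all to vanish), and the domination/separation claims are read off from the null-homology condition hole by hole. You simply spell out the winding-number bookkeeping that the paper compresses into ``elementary homological considerations in the page.''
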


\begin{proof}
The proof is immediate once we write the adjunction formula as in the previous proof; indeed, the equation
\[
\sum (b_j+b_j^2) = 2
\]
implies that all coefficients $b_j$ are either $0$ or $-1$, except for exactly one $j$, for which $b_j = -2$ or $b_j = 1$.
However, the first case is excluded, since otherwise all coefficients would have the same sign.

We now turn to the second part of the statement.
Since $[\alpha_1-\alpha_2-\dots-\alpha_\ell]_W$ is a homology class in $H_2(W)$, $[\alpha_1]_P = [\alpha_2]_P+\dots+[\alpha_\ell]_P$.
Fix a hole in $P$, and let us consider the winding number $w_i$ of $\alpha_i$ around it.
Since $[\alpha_1-\alpha_2-\dots-\alpha_\ell]_P=0\in H_1(P)$, its total winding number around the hole is $0$; on the other hand, by linearity, it is also $w_1-w_2-\dots-w_\ell$, so we have $w_1 = w_2+\dots+w_\ell$.
Since each $w_i$ is either 0 or 1, we immediately see that $\alpha_1\succ\alpha_2, \dots, \alpha_\ell$, and that $\alpha_i$ and $\alpha_j$ are separated whenever $2\le i\neq j\le \ell$.
\end{proof}

\section{Links of hypersurface singularities}\label{links}

In this section, we turn our attention to links of  isolated singularities of complex hypersurfaces in $\CC^3$.
Consider a complex hypersurface $\Sigma\subset \CC^3$, given by an
equation $F(z_1, z_2, z_3)=0$ with an isolated critical point at the origin, 
and let $(Y, \xi)$ be the link of the singularity with its canonical contact structure, so that 
 $Y= \Sigma \cap \{|z_1|^2+|z_2|^2+|z_3|^2 = \eps\}$.
The manifold $(Y, \xi)$ is Stein fillable, with the standard filling given by the {\em Milnor fiber} $\{F(z_1, z_2, z_3)=\eta \} \cap D^6$ 
for small $\eta>0$;
the Milnor fiber is the smoothing of $\Sigma \cap \{|z_1|^2+|z_2|^2+|z_3|^2 \le \eps\}$.

We now consider an example, the link of the $D_4$-singularity.
As a 3-manifold, this is described by the surgery diagram of Figure~\ref{seifert-space} where $e_0 = -1/r_1 = -1/r_2 = -1/r_3 = -2$;
that is, it is the boundary of the plumbing associated to the graph $D_4$ (see Figure~\ref{Dn}).
This example illustrates the main idea of our obstruction and will also be the key case of Theorem~\ref{sing}.

\begin{lemma}\label{d4}
The canonical contact structure on the link of the $D_4$-singularity is not planar.
\end{lemma}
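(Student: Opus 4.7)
The plan is to use the Milnor fiber as a Stein filling and exploit the rigidity of planar Lefschetz fibrations. The Milnor fiber $M$ of the $D_4$-singularity is a Stein filling of $(Y,\xi)$ whose second homology contains the $D_4$-plumbing configuration: four classes $X, B_1, B_2, B_3$ arising as vanishing cycles (Lagrangian $(-2)$-spheres) with $X\cdot X = B_i\cdot B_i = -2$, $X\cdot B_i = 1$, and $B_i\cdot B_j = 0$ for $i\neq j$. Assume for contradiction that $\xi$ is planar. By Wendl's theorem, $M$ is deformation equivalent to a Lefschetz fibration whose fiber $P$ is the planar page of an open book supporting $\xi$, with positive monodromy factorization into Dehn twists along $\alpha_1,\dots,\alpha_m \subset P$.

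Apply Proposition~\ref{iform}: write $X = \sum x_j\alpha_j$ and $B_i = \sum b_{i,j}\alpha_j$ as null-homologous linear combinations and record the constraints
\begin{align*}
\sum_j x_j^2 = 2, \qquad \sum_j b_{i,j}^2 = 2, \qquad \sum_j b_{i,j}b_{i',j} = 0 \ (i\neq i'), \qquad \sum_j x_j b_{i,j} = -1.
\end{align*}
Each of the vectors $\mathbf{x}, \mathbf{b}_1, \mathbf{b}_2, \mathbf{b}_3 \in \ZZ^m$ therefore has exactly two nonzero entries, each equal to $\pm 1$.

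The decisive input is the orientation convention from Section~\ref{compute}: each $\alpha_j$, oriented as the boundary of the region of $P$ disjoint from $\partial D$, has winding number $0$ or $+1$ around every hole, and it is assumed to be homologically essential in $P$. Thus, if $\mathbf{b}_i = \pm(e_p + e_q)$, null-homology of the corresponding chain would force the winding number of each of $\alpha_p,\alpha_q$ around every hole to be zero, contradicting essentiality. So the two nonzero entries in each of $\mathbf{x}, \mathbf{b}_1, \mathbf{b}_2, \mathbf{b}_3$ have opposite signs; up to sign each is of the form $e_p - e_q$. A short case analysis then shows that any two such vectors with a single shared index have inner product $\pm 1$, while two such vectors with the same support give inner product $\pm 2$. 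Hence orthogonality of $\mathbf{b}_i$ and $\mathbf{b}_{i'}$ forces their supports to be disjoint.

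The three vectors $\mathbf{b}_1, \mathbf{b}_2, \mathbf{b}_3$ therefore have pairwise disjoint two-element supports, using six distinct indices in total. But $\mathbf{x}$ has only two nonzero entries and must meet the support of each $\mathbf{b}_i$ in order to satisfy $\mathbf{x}\cdot\mathbf{b}_i = -1 \neq 0$; by pigeonhole this is impossible, contradicting the planarity assumption. The only genuinely nontrivial step — and the one that carries the geometric content — is the winding-number argument forcing opposite signs in each length-two expression; once this is in hand, the rest is purely combinatorial.
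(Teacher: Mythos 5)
Your proof is correct and takes essentially the same approach as the paper's: invoke Wendl's theorem to realize the Milnor fiber as a planar Lefschetz fibration, use Proposition~\ref{iform} to see that each $(-2)$-class is supported on exactly two vanishing cycles with coefficients $\pm 1$, and then run a combinatorial pigeonhole argument showing that the orthogonality of the three outer vertices forces their supports to be pairwise disjoint, leaving no way for the two-element support of the central vertex to meet all three. The only cosmetic difference is that you make explicit the winding-number argument forcing each pair to be a genuine difference $\alpha_p - \alpha_q$ rather than a sum (the paper absorbs this into the earlier observation in Section~\ref{compute} that all $\alpha_i$ have winding number $0$ or $1$ around every hole), and you frame the final step as ``the $\mathbf{b}_i$ have disjoint supports, so $\mathbf{x}$ cannot meet all three'' whereas the paper phrases it sequentially in terms of shared curves; these are the same argument.
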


\begin{proof} Consider the Milnor fiber $W$ of the $D_4$-singularity. This is a Stein filling of the canonical contact
structure on the link.
The intersection form of $W$ is given by the $D_4$-graph (Figure~\ref{Dn}).
We label its central vertex $X$, and the other vertices $A$, $B$, $C$.

For the sake of contradiction, suppose that the canonical contact structure on $D_4$ admits an open book with planar page $P$.
As before, by Wendl's theorem we know that $W$ admits the structure of a Lefschetz fibration whose fiber is the page $P$, 
and the vanishing cycles come from a positive factorization of the monodromy.
The intersection form on $W$ can be computed as in Proposition~\ref{iform}; 
we now examine possibilities for vanishing classes that could produce $D_4$.   

To begin, we need to have four classes with self-intersection $-2$.
By Proposition~\ref{iform}, each of these must be given by the difference of two curves,
corresponding to two \emph{distinct} vanishing cycles (which could, however, be isotopic as curves in $P$);
moreover, it must be a null-homologous linear combination, so the two curves should be homologous.
This means that the two curves must encircle the same holes of the disk.
Note that the curves {\em do not} have to bound an annulus and {\em do not} have to be homotopic, see Figure~\ref{homolo-curves}.

\begin{figure}[ht]
\includegraphics[scale=0.6, keepaspectratio]{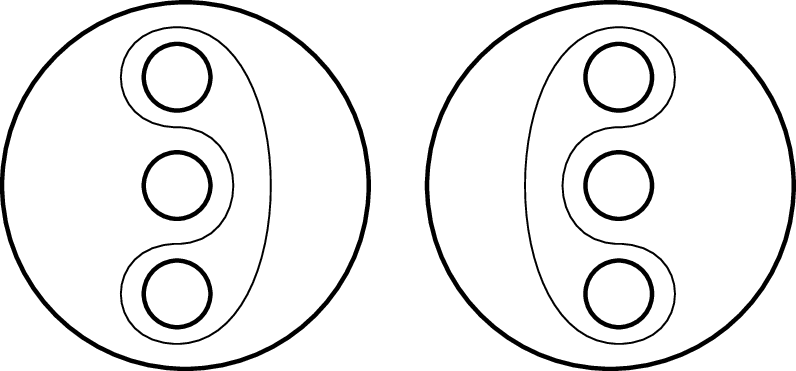}
\caption{Simple closed curves in $P$ are homologous if and only if they encircle the same holes. The curves shown are homologous but not homotopic in the three-holed disk.}
\label{homolo-curves}
\end{figure}

Let the class of the central vertex $X$ be $[\alpha-\beta]_W$.
Similarly, the class $A$ is given by two homologous curves, and since $A \cdot X = 1$, exactly one of these curves must coincide with $\alpha$ or $\beta$.
We may assume that $A = [\gamma-\alpha]_W$  (where
the vanishing cycle $\gamma$ is different from both $\alpha$ and $\beta$); note also that $\alpha$ and $\beta$ must be distinct.
Similarly, both classes $B$ and $C$ must be given by pairs of vanishing cycles, so that exactly one of the curves in the difference representing each pair coincides with $\alpha$ or $\beta$. 
However, since $A \cdot B =A \cdot C = B \cdot C = 0$, no curves may be used in more than one pair,
which is clearly not possible.
Indeed, if $B = [\beta-\delta]_W$, $C$ can use neither $\alpha$ nor $\beta$.
\end{proof}

We are now ready to prove Theorem~\ref{sing}.       
         
\begin{proof}[Proof of Theorem~\ref{sing}] Since  the $D_n$-graph contains $D_4$, the argument of Lemma~\ref{d4} applies to show that
the link of the $D_n$-singularity is not planar for any $n>4$. 
 \begin{figure}[ht]
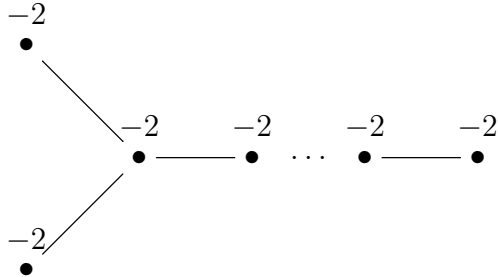

$
\xygraph{
!{<0cm,0cm>;<1cm,0cm>:<0cm,1cm>::}
!~-{@{-}@[|(2.5)]}
!{(0,0) }*+{\bullet}="a1"
!{(1.5,0) }*+{\bullet}="a2"
!{(2.5,0) }*+{\dots}="dots"
!{(5,0) }*+{\bullet}="ak1"
!{(3.5,0) }*+{\bullet}="ak"
!{(-1.06,1.06) }*+{\bullet}="c"
!{(-1.06,-1.06) }*+{\bullet}="d"
!{(0,0.4) }*+{-2}
!{(1.5,0.4) }*+{-2}
!{(5,0.4) }*+{-2}
!{(3.5,0.4) }*+{-2}
!{(-1.06,1.46) }*+{-2}
!{(-1.06,-1.46) }*+{-2}
"c"-"a1"
"a2"-"a1"
"ak1"-"ak"
"d"-"a1"
"a2"-"dots"
"dots"-"ak"
}$
  \caption{The $D_n$-graph, which has $n$ vertices, all labeled with $-2$.}
  \label{Dn}
\end{figure}
For links of all other surface singularities, the theorem follows from previously known results. 
Indeed, by~\cite{Tjurina}, 
the only surface singularities with negative definite Milnor fiber are 
the simple singularities $A_n$, $D_n$, $E_n$.
Etnyre's theorem says that every filling of a planar contact structure must be negative definite~\cite{Etn}, 
and since the Milnor fiber gives a Stein filling, it follows that only the links of A-D-E singularities can be planar.
The case of $E_8$ is ruled out by~\cite[Theorem 1.2]{Etn}, as the corresponding link is an integral homology sphere with a non-standard 
intersection form. 
The cases of $E_6$ and $E_7$ are similarly ruled out using~\cite[Theorem 1.2]{Etn}: although not stated explicitly in Etnyre's paper, 
the same proof applies to show that for a planar rational homology sphere, the intersection form of any Stein filling must
embed in a negative definite diagonal lattice.
 The links of $E_6$ and $E_7$ are rational homology spheres; the corresponding Milnor 
fibers, i.e. fillings given by the plumbing graphs, have intersection forms $E_6$ and $E_7$. Neither embeds into the standard 
lattice, thus the canonical structures on the links of $E_6$ and $E_7$ cannot be planar.

Alternatively, the cases of $E_6$ and $E_7$ follow from Lemma~\ref{d4}, as the $E_6$- and $E_7$-graphs both contain the $D_4$-graph.

Finally, the links of the $A_n$-singularities are the lens spaces $L(n+1,n)$, and their canonical contact structures are easily seen to be planar~\cite{Scho}.
\end{proof}

\section{The homological obstruction}\label{therest}

We now prove Theorem~\ref{general}; the argument is very similar to the proof of Lemma~\ref{d4}.
As in Section~\ref{compute}, we use the notation $\alpha_\bb$ to denote the linear combination $b_1\alpha_1+\dots+b_m\alpha_m$ of curves associated to the $m$-tuple $\bb = (b_1,\dots,b_m)$.
Moreover, given an $m$-tuple $\bb$, we call the set $\{i \mid b_i \neq 0\}$ the \emph{support} of $\bb$.
In other words, the support of $\bb$ is the set of curves used by $\alpha_\bb$.
By extension, we also call the same set the support of the associated homology class $[\alpha_\bb]_W$ (when this makes sense).

\begin{lemma}
Suppose $W$ is the minimal filling of the contact structure $(Y,\xi)$, associated to the factorization of the monodromy $\phi: P\to P$, where $P$ is planar.
Suppose $B_1, B_2 \in H_2(W)$ satisfy
\[
B_1\cdot B_1, B_2\cdot B_2 \in \{-2,-3\}, \quad B_1\cdot B_2 = 0.
\]
Then, $B_1$ and $B_2$ have disjoint support.
\end{lemma}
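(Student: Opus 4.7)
The plan is to represent each $B_i$ as a null-homologous linear combination $\bb^i = (b_1^i,\dots,b_m^i)$ of vanishing cycles, and then exploit Proposition~\ref{iform}, which gives $B_i\cdot B_i = -\sum_j (b_j^i)^2$ and $B_1\cdot B_2 = -\sum_j b_j^1 b_j^2$. Since the entries are integers and $\sum_j (b_j^i)^2 \in\{2,3\}$, each $\bb^i$ has either exactly two nonzero entries, both $\pm 1$ (if $B_i\cdot B_i = -2$), or exactly three nonzero entries, all $\pm 1$ (if $B_i\cdot B_i=-3$). The strategy is then to establish a rigid structural description of these combinations using the hole-winding-number bookkeeping from Section~\ref{compute}, and finally to run a case analysis on the intersection $S_1\cap S_2$ of supports.

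First I would record the structural description. For each hole $h$ of $P$, let $\omega_j(h)\in\{0,1\}$ be the winding number of $\alpha_j$ around $h$; the null-homology condition is $\sum_j b_j^i\omega_j(h)=0$ for every $h$, and essentiality of each $\alpha_j$ guarantees that $\omega_j\not\equiv 0$. A quick check of the possible $(\pm 1)$-patterns summing to zero shows:
\begin{itemize}
\item A $-2$ class takes the form $\alpha-\beta$ with $\omega_\alpha = \omega_\beta$ (so $\alpha$ and $\beta$ are homologous in $P$, encircling the same set of holes).
\item A $-3$ class takes the form $\gamma-\alpha-\beta$ (up to overall sign) with $\omega_\gamma=\omega_\alpha+\omega_\beta$ and $\omega_\alpha\cdot\omega_\beta=0$ pointwise; equivalently, $\alpha$ and $\beta$ are separated and $\gamma$ dominates both, in the terminology of Section~\ref{compute}.
\end{itemize}
In the $-3$ case, the key observation is that at every hole the triple $(\eps_1\omega_{i_1}(h),\eps_2\omega_{i_2}(h),\eps_3\omega_{i_3}(h))$ must sum to $0$ with entries in $\{-1,0,1\}$, leaving only $(0,0,0)$ or a permutation of $(1,-1,0)$; if one sign is isolated from the other two, then the isolated curve must have $\omega\equiv 0$, which is forbidden.

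Next, assuming for contradiction that $S_1\cap S_2\neq\emptyset$, I would dispatch the easy subcases immediately. If $|S_1\cap S_2|=1$, the unique shared index contributes $\pm 1$ to $\sum_j b_j^1b_j^2$, so $B_1\cdot B_2=\mp 1\neq 0$. If $S_1=S_2$, the uniqueness of the null-homologous sign pattern (up to overall $\pm 1$) forces $\bb^2=\pm\bb^1$, and so $B_1\cdot B_2=\pm B_1\cdot B_1\neq 0$. The only remaining case is $|S_1\cap S_2|=2$, which is where the real work lies.

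The main obstacle is therefore the $|S_1\cap S_2|=2$ subcase, which further breaks up by the types of $B_1$ and $B_2$. The condition $B_1\cdot B_2=0$ forces the sign patterns on the two shared indices to disagree in exactly one position. Running through the small number of sub-subcases using the structural description, I expect each to collapse to a clash between two incompatible relations among the $\omega_j$: either a curve is asserted both to dominate and to be separated from another (type $-3$/type $-3$), or a pair of curves is simultaneously homologous and separated in $P$ (type $-2$/type $-3$), or one ends up with $\bb^2=\pm\bb^1$ again. In every subcase the conclusion is that some $\omega_j\equiv 0$, contradicting the essentiality of $\alpha_j$. This bookkeeping is the only delicate part; once the structural description is in hand, the rest is a finite, mechanical enumeration.
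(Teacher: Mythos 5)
Your proof is correct and takes essentially the same approach as the paper: both derive the structural description of self-intersection $-2$ and $-3$ classes (homologous curves; a dominating curve minus two separated curves), then run a finite case analysis on overlapping supports to reach the same clashes (homologous vs.\ separated, or dominating vs.\ separated). The only difference is cosmetic — you organize the cases by $|S_1\cap S_2|$, while the paper organizes them by the pair of self-intersection numbers — but the underlying arguments are identical.
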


\begin{proof}
The proof is split into three cases:
\begin{enumerate}
\item $B_1\cdot B_1 = B_2 \cdot B_2 = -2$,
\item $B_1\cdot B_1 = -2$, $B_2 \cdot B_2 = -3$, and
\item $B_1\cdot B_1 = B_2 \cdot B_2 = -3$.
\end{enumerate}
(The case $B_1\cdot B_1 = -3$, $B_2 \cdot B_2 = -2$ is clearly symmetric to the second case, so we can omit it.)

To fix the notation, suppose that $W$ is associated to the factorisation of $\phi$ into Dehn twists along curves $\alpha_1,\dots,\alpha_m$.
We recall that, if a class in a minimal weak filling of a planar contact structure has self-intersection $-2$, then it is represented by the difference of two homologous curves.
Along the same lines, if a class as above has self-intersection $-3$, it is of the form $\pm[\alpha_i - \alpha_j - \alpha_k]_W$, where $[\alpha_i]_P = [\alpha_j]_P + [\alpha_k]_P$.
In particular, there are holes in $P$ around which both $\alpha_i$ and $\alpha_j$ (respectively, $\alpha_k$) have both winding number 1.
\begin{enumerate}
\item Without loss of generality, suppose that $B_1 = [\alpha_1 - \alpha_2]_W$, where $\alpha_1$ and $\alpha_2$ are homologous in $P$.
If the support of $B_2$ is not disjoint from that of $B_1$, then $B_2 = [\alpha_i-\alpha_j]_W$ where $i$ or $j$ is either 1 or 2, 
and $[\alpha_i]_P$ and $[\alpha_j]_P$.
One easily sees that neither combination works: for example, if $B_2=[\alpha_3-\alpha_1]_W$ with $\alpha_3\neq \alpha_1, \alpha_2$, 
then $B_1 \cdot B_2=1$, if $B_2=[\alpha_2-\alpha_1]_W$, then $B_1 \cdot B_2=2$, and other cases are similar.

\item As above, suppose $B_1 = [\alpha_1 - \alpha_2]_W$.
If the support of $B_2$ is not disjoint from that of $B_1$, the only possibility is that 
$\pm B_2 = [\alpha_3 - \alpha_1 - \alpha_2]_W$, since $\alpha_1$ and $\alpha_2$ must appear with the same sign.
But $\alpha_1$ and $\alpha_2$ have winding number 1 around the same holes, and so their sum cannot be homologous to a simple closed curve $\alpha_3$.

\item Without loss of generality, suppose that $B_1 = [\alpha_1 - \alpha_2 - \alpha_3]_W$.
(Note that here we are using that the assumptions are unchanged if we change sign to either $B_1$ or $B_2$.)
Suppose that the support of $B_2$ is not disjoint from that of $B_1$.
Up to relabeling the indices and up to changing the sign of $B_2$, the only possibility is that $B_2 = [\alpha_4 - \alpha_1 - \alpha_2]_W$.
But, again, as observed above, $\alpha_1$ and $\alpha_2$ have winding number 1 around \emph{some} hole, and so their sum cannot be homologous to a simple closed curve $\alpha_4$.\qedhere
\end{enumerate}
\end{proof}

\begin{proof}[Proof of Theorem~\ref{general}]
Suppose we have a configuration of curves $B_1,\dots, B_k$, $X$ as in the statement.
By the previous lemma, $B_1, \dots, B_k$ have pairwise disjoint supports.
Since $X$ meets non-trivially each of $B_1,\dots,B_k$, its support must intersect at least the support of each of them, and in particular $X\cdot X \le -k$, thus leading to a contradiction.
\end{proof}

\begin{proof}[Proof of Corollary~\ref{seifert}]
For Seifert fibered L-spaces  $M(-2; r_1, r_2, r_3)$, $r_i \in (0,1) \cap \QQ$, classification of tight contact structures was given in~\cite{Gh}.
Every tight contact structure on this space can be obtained by expanding 
the rational parameters $-\frac1{r_i}$ as continued fractions,
\begin{equation*}
-\frac1{r_i}=a_0^{i}-\cfrac{1}{a_1^{i}-\cfrac{1}{a_2^{i} - \cdots}}
\end{equation*}
and making a Legendrian surgery diagram where each $-\frac1{r_i}$-framed circle is replaced by a chain of Legendrian unknots with Thurston--Bennequin numbers 
given by the coefficients $a_0^{i}+1, a_1^{i}+1, \dots$.
Since by assumption $r_1, r_2, r_3 \geq \frac13$, we have that
$a_0^{1}, a_0^{2}, a_0^{3} \in \{-2, -3\}$.
Thus, the corresponding plumbing graph for the Stein filling satisfies the hypotheses of Theorem~\ref{general}, and therefore the contact structure is not planar.
\end{proof}
         
We observe that Theorem~\ref{general} applies in many situations where the filling is not a plumbing of spheres.

\begin{figure}[ht]
\labellist
\pinlabel $L_0$ at 88 23
\pinlabel $L_1$ at 32 86
\pinlabel $L_k$ at 144 86
\pinlabel $\dots$ at 89 86
\endlabellist
\includegraphics[scale=0.8]{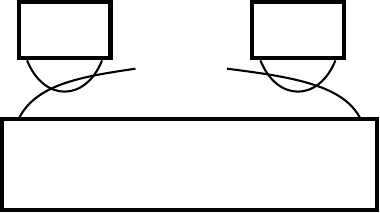}
\caption{The Legendrian surgery diagram for Example~\ref{e:paolo}.
The Legendrian knot $L_i$ have $k-1 < \tb(L_0) < 0$, $\tb(L_1), \dots, \tb(L_{k}) \in \{-1, -2\}$.}\label{f:paolo}
\end{figure}

\begin{example}\label{e:paolo}
Consider the Legendrian surgery diagram of Figure~\ref{f:paolo}, where $k > 2$ and the Legendrian tangles $L_0, \dots, L_k$ satisfy the following properties:
\begin{itemize}
\item the closure of $L_0$ is a Legendrian knot with $1-k < \tb(L_0) < 0$;
\item for each $i > 0$ the closure of $L_i$ is a Legendrian link; the closure of the \emph{arc} in the tangle is a Legendrian knot $L^0_i$ with $\tb(L^0_i)\in\{-1,-2\}$ for each $i$;
\item at least one of the knots $L_0, L_1^0, \dots, L_k^0$ is nontrivial.
\end{itemize}
Then the corresponding Stein 4-manifold $(W,J)$ is not diffeomorphic to a plumbing of spheres, but it contains a configuration of homology classes that satisfy the assumptions of Theorem~\ref{general}.
Therefore, the boundary $(Y,\xi)$ of $(W,J)$ is not a planar contact manifold.
\end{example}

Finally, we observe that the technique of the proof of Theorem~\ref{general} has a limit.
Indeed, the configuration below gives no obstruction to planarity simply by looking at intersection forms, as the intersection form given by the graph in the figure can be embedded into the intersection form of the unique symplectic filling of $L(6,5)$.
The filling $W$ is associated to the open book $(S^1\times I, \tau^6)$,  where $\tau$ is the right-handed Dehn twist along the core of the annulus, and the underlying 4-manifold of the filling is diffeomorphic to a linear plumbing of five spheres of self-intersection $-2$.
There are six vanishing cycles, $\alpha_1,\dots,\alpha_6$, all parallel to the core of the annulus;
the homology classes of the spheres in the plumbing are $[\alpha_1-\alpha_2]_W, \dots, [\alpha_5-\alpha_6]_W$.
The embedding we seek is then given as follows (here we omit the suffix for the homology classes for readability):
\[
\xygraph{
!{<0cm,0cm>;<1cm,0cm>:<0cm,1cm>::}
!~-{@{-}@[|(2.5)]}
!{(0,0) }*+{\bullet}="a1"
!{(1.5,0) }*+{\bullet}="a2"
!{(-1.06,1.06) }*+{\bullet}="c"
!{(-1.06,-1.06) }*+{\bullet}="d"
!{(0,0.4) }*+{-2}
!{(1.5,0.4) }*+{-2}
!{(-1.06,1.46) }*+{-4}
!{(-1.06,-1.46) }*+{-2}
"c"-"a1"
"a2"-"a1"
"d"-"a1"
} \qquad = \qquad 
\xygraph{
!{<0cm,0cm>;<1cm,0cm>:<0cm,1cm>::}
!~-{@{-}@[|(2.5)]}
!{(0,0) }*+{\bullet}="a1"
!{(1.5,0) }*+{\bullet}="a2"
!{(-1.06,1.06) }*+{\bullet}="c"
!{(-1.06,-1.06) }*+{\bullet}="d"
!{(0.4,0.4) }*+{\phantom{}_{[\alpha_2 - \alpha_3]}}
!{(1.8,0.4) }*+{\phantom{}_{[\alpha_3 - \alpha_4]}}
!{(-1.06,1.46) }*+{\phantom{}_{[\alpha_1+\alpha_2-\alpha_5-\alpha_6]}}
!{(-1.06,-1.40) }*+{\phantom{}_{[\alpha_1-\alpha_2]}}
"c"-"a1"
"a2"-"a1"
"d"-"a1"
}
\]
The boundary of the plumbing given above is the Seifert manifold $M\! \left (-2; \frac 12, \frac 12, \frac 14 \right )$.
It has three tight contact structures, each presented by a contact surgery diagram directly coming from the plumbing graph~\cite{Tos} (see also~\cite{Gh,LS3}).
Theorem~\ref{general} gives no information on planarity of these contact structures; however, Theorem~\ref{sing2} shows that two of them (one conjugate to the other) are not planar.
We do not know whether the third one (which is self-conjugate) is planar or not.

\section{Links of normal surface singularities}\label{normal-links}

The goal of this section is to prove Theorem~\ref{sing2}.
We will use terminology and results from Section~\ref{compute} without explicit mention; before proving the theorem, we recall a few facts and definitions concerning resolutions of surface singularities and plumbing graphs.

Given a complex surface $X$ with an isolated  singularity at $0$, we can consider its resolution $\pi:\tilde{X}\to X$.
The resolution is {\em good} if the irreducible components of the exceptional divisor $\pi^{-1}(0)$ are smooth complex curves that intersect transversely at double points only.
The topology of the resolution is encoded by the {\em (dual) resolution graph} $\Gamma$. The vertices of $\Gamma$ correspond to irreducible components of the exceptional divisor  and are labeled by the genus and the self-intersection (weight) of the corresponding curve; the edges record intersections of different irreducible components.
The link of the singularity is then the boundary of the plumbing of disk bundles over surfaces according to $\Gamma$. 
A good resolution is not unique, but graphs arising from different resolutions are related by a finite sequence of blow-ups/blow-downs of vertices corresponding to spheres with self-intersection $-1$.

It is known that a surface singularity is normal if and only if its resolution graph is negative-definite.
(This property simultaneously holds or fails for resolution graphs of all good resolutions.)  
For a graph with negative integer weights associated to its vertices, recall that a {\em bad vertex} is a vertex $v$ with weight $-w(v)$ such that 
\[
0<w(v)< \text{valence of $v$.}
\] 
A normal surface singularity is known to be rational if its graph has no bad vertices, but the converse is not true.  
(We refer the reader to~\cite{Nem5} for details of the definitions above and their topological significance.) We will need a subclass of rational singularities:
\begin{definition}\label{def-redfundc}
A normal surface  singularity whose dual resolution graph is a tree of spheres with no bad vertices is called a {\em rational singularity with reduced fundamental cycle.}   
\end{definition}
In this paper we  only work with resolution graphs, without referring to any other properties of this class of singularities. N\'emethi~\cite{NemL} proved that a normal surface singularity is rational if and only if its link is an L-space.
Using this, Theorem~\ref{sing2} implies that if the canonical contact structure on a link of singularity is planar, then this link must be an L-space.     

As mentioned in the introduction, canonical contact structures are known to be planar for links of rational surface singularities with reduced fundamental cycle.
We now prove the converse: if the canonical contact structure admits a planar open book, then the resolution graph is a tree of spheres with no bad vertices. 
We begin with two lemmas; both are probably well-known, but for convenience we give their proofs.

\begin{lemma}\label{l:minimalGamma}
Suppose the graph $\Gamma$ is minimal, i.e. it contains no vertices of weight $-1$ given by spheres.
Then $(W, \omega')$ is minimal.
\end{lemma}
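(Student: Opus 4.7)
The plan is to argue by contradiction. Suppose $(W,\omega')$ is not minimal, so it contains a symplectically embedded sphere $E$ with $[E]\cdot[E]=-1$; I would derive a contradiction with the minimality hypothesis on $\Gamma$. Let $C_v$ denote the symplectic surface in $W$ corresponding to each vertex $v$ of $\Gamma$, and let $M$ be the intersection matrix of $\Gamma$.

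First I would apply positivity of intersections between $E$ and each $C_v$ (after choosing a compatible almost complex structure for which they are simultaneously holomorphic, perturbing near intersection points if necessary): for each $v$, either $E$ coincides with $C_v$ as an unparametrized submanifold, or $E\cdot C_v\ge 0$. In the former case $C_v$ would be a sphere of self-intersection $-1$, contradicting the minimality of $\Gamma$, so from now on one may assume $E\cdot C_v\ge 0$ for every $v$.

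Next, since $\Gamma$ is negative definite and $W$ deformation retracts onto $\bigcup_v C_v$, the classes $\{[C_v]\}$ form a basis of $H_2(W)$, so we may write $[E]=\sum_v a_v[C_v]$. The constraints $[E]\cdot[C_v]=(M\mathbf{a})_v\ge 0$ become $(-M)\mathbf{a}\le 0$ componentwise. Now $-M$ is symmetric positive definite with nonpositive off-diagonal entries, i.e.\ a symmetric M-matrix, so its inverse has nonnegative entries (applied to each connected component of $\Gamma$ separately if needed); therefore $\mathbf{a}=(-M)^{-1}\bigl((-M)\mathbf{a}\bigr)\le 0$ componentwise.

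Finally, since $E$ and each $C_v$ are $\omega'$-symplectic, $\int_E\omega'>0$ and $\int_{C_v}\omega'>0$, but the relation $\int_E\omega'=\sum_v a_v\int_{C_v}\omega'$ together with $a_v\le 0$ for all $v$ forces $\int_E\omega'\le 0$, a contradiction. The main technical point I expect to need is the nonnegativity of the entries of $(-M)^{-1}$ for symmetric M-matrices, which is classical; once this is in hand the argument combines positivity of intersections, negative definiteness of $\Gamma$, and the positivity of the symplectic area on symplectic submanifolds in a direct way.
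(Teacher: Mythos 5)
Your argument takes a genuinely different route from the paper's. The paper works purely homologically: writing $[E]=\sum a_i[C_v]$, it first shows the coefficients all have the same sign (via indecomposability of $E$ in the negative-definite lattice and the trick of comparing $E\cdot E$ with $|E|\cdot|E|$ where $|E|=\sum|a_i|[C_v]$), concludes $a_i\ge 0$ from $\int_E\omega'>0$, and then gets a contradiction from adjunction: each $c_1([C_v])=2-2g(C_v)+w_v\le 0$ by minimality of $\Gamma$, so $1=c_1(E)=\sum a_i\,c_1([C_v])\le 0$. You instead go through positivity of intersections with each $C_v$ and the classical fact that the inverse of a Stieltjes matrix (symmetric positive-definite M-matrix) is entrywise nonnegative, forcing $a_v\le 0$ and contradicting $\int_E\omega'>0$. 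The M-matrix step is correct and elegant, and the two proofs happen to force opposite signs on the coefficients before reaching the same symplectic-area contradiction.

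However, there is a real gap in your first step: it is not automatic that the given symplectic $(-1)$-sphere $E$ and the exceptional configuration $\{C_v\}$ can be made \emph{simultaneously} $J$-holomorphic for a single compatible $J$. One can certainly choose $J$ making all the $C_v$ $J$-holomorphic, but a fixed symplectic sphere $E$ need not be $J$-holomorphic for that $J$, and may intersect the $C_v$'s negatively in its given position; "perturbing near intersection points" does not in general eliminate negative intersections of symplectic (as opposed to pseudoholomorphic) submanifolds. The standard fix is to take a generic $J$ adapted to $\{C_v\}$ and produce a $J$-holomorphic (possibly nodal) representative of the class $[E]$ by Gromov/Taubes theory, then apply positivity of intersections to its components; but this is nontrivial machinery, and for a filling with boundary (rather than a closed manifold) additional arguments are needed to keep curves from escaping through the convex boundary. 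This is precisely the technicality that the paper's adjunction-based argument sidesteps: it only uses the numerical constraints $E\cdot E=-1$, $c_1(E)=1$, $\int_E\omega'>0$ that follow from $E$ being a symplectic sphere, with no pseudoholomorphic-curve input at all. If you want to keep your approach, you should either cite the relevant results on $J$-holomorphic representatives of exceptional classes in symplectic fillings, or switch to the purely homological adjunction argument.
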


\begin{proof}
The assumption on $\Gamma$ translates as: for each vertex $E_i$, either the weight $w_i = E_i\cdot E_i$ 
satisfies $w_i < -1$, or $w_i = -1$ and $g(E_i) > 0$.

We claim that no homology class $E = \sum_i a_iE_i$ on which $\omega$ 
integrates positively can satisfy $E\cdot E = -1$ and $g(E) = 0$; in fact, 
such a class would also have to satisfy $c_1(E) = 1$, by adjunction.

We first claim that all the coefficients $a_i$ are all positive.
Since any class with all negative coefficients  obviously cannot be symplectic, 
it is enough to show that all coefficients must have the same sign.
First, we observe that the class $E$ is indecomposable, i.e. if we write $E = E'+E''$, where $E'\cdot E'' = 0$, then either $E'=0$ or $E''=0$; in fact, since $\Gamma$ is negative definite, if $E$ decomposed as $E'+E''$, both $E'$ and $E''$ would have negative square, and $E\cdot E = E'\cdot E' + E''\cdot E'' \le -2$.
This, in turn, implies that the support of $E$, i.e. the set of vertices for which $a_i \neq 0$, is connected.
Suppose now that the coefficients do not all have the same sign; then there are two coefficients $a_i < 0 < a_j$ such that $E_i \cdot E_j = 1$.
Write $|E|$ for the homology class $|E| = \sum_i |a_i|E_i$.
We now observe that
\[
-1 = E\cdot E = \sum_i a_i^2 w_i + \sum_{i,j} a_ia_j E_i\cdot E_j < \sum_i a_i^2 w_i + \sum_{i,j} |a_i||a_j|E_i\cdot E_j = |E|\cdot|E| \le -1.
\]

Now we know that $a_i \ge 0$ for each $i$.
Adjunction for each vertex shows that $c_1(E_i) = 2 - 2g(E_i) + w_i$, and the latter quantity is never positive by assumption. Hence,
\[
1 = c_1(E) = \sum_i a_i c_1(E_i) \le 0.\qedhere
\]
\end{proof}

\begin{lemma}\label{l:genus-curve-sing}
Let $(C,0)$ be a curve singularity in $(\CC^2,0)$. The Milnor fiber of $C$ has genus $0$ if and only if $(C,0)$ is either smooth or a double point.
\end{lemma}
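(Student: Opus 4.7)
The plan is to reduce everything to two classical numerical invariants of a plane curve singularity $(C,0) \subset (\CC^2,0)$: its Milnor number $\mu$, its number of local branches $r$, and its $\delta$-invariant. Recall that the Milnor fiber $F$ is a connected, compact, oriented surface with $r$ boundary components (one per branch of $C$, corresponding to one component of the link of the singularity), and $b_1(F) = \mu$. Since $b_1(F) = 2g(F) + r - 1$ for such an $F$, combining this identity with Milnor's formula
\[
\mu = 2\delta - r + 1
\]
yields
\[
g(F) = \delta - r + 1.
\]
So the problem reduces to classifying the plane curve singularities for which $\delta = r - 1$.

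To attack this, I will use the standard decomposition
\[
\delta = \sum_{i=1}^{r} \delta_i + \sum_{i < j}(C_i \cdot C_j)_0,
\]
where $C_1, \dots, C_r$ are the branches of $(C,0)$. Here $\delta_i \geq 0$ with equality if and only if $C_i$ is smooth, and the local intersection multiplicity $(C_i \cdot C_j)_0 \geq 1$ with equality precisely when $C_i$ and $C_j$ meet transversely at the origin. In particular $\delta \geq \binom{r}{2}$.

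From here a short case analysis finishes the proof. If $r = 1$, then $\delta = 0$ simply means $(C,0)$ is smooth. If $r = 2$, then $\delta = 1$ forces $\delta_1 = \delta_2 = 0$ and $(C_1 \cdot C_2)_0 = 1$, so both branches are smooth and meet transversely at $0$; that is, $(C,0)$ is a node. If $r \geq 3$, then $\delta \geq r(r-1)/2 > r - 1$, and hence $g(F) \geq 1$. Conversely, a smooth point gives $\mu = 0$ and $r = 1$, while a node gives $\mu = 1$ and $r = 2$, so in both cases $g(F) = 0$. I do not expect any real obstacle: the content of the lemma is contained entirely in Milnor's formula and the bound $\delta \geq \binom{r}{2}$, so the only care needed is in tracking these classical identities carefully.
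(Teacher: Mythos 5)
Your proof is correct and follows essentially the same route as the paper: both reduce to the identity $g = 1 + \delta - r$ (via $b_1(F)=\mu$ and Milnor's formula $\mu = 2\delta - r + 1$) and then bound $\delta$ from below by $\binom{r}{2}$ to isolate the cases $r=1$ (smooth) and $r=2$ with $\delta=1$ (node). The only small difference is that the paper derives $\delta \geq r(r-1)/2$ from the multiplicity bound $\delta \geq m(m-1)/2$ with $m\geq r$, whereas you use the branch decomposition $\delta = \sum_i \delta_i + \sum_{i<j}(C_i\cdot C_j)_0$, which has the mild advantage of making the equality case (both branches smooth, transverse) entirely explicit.
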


\begin{proof}
Indeed, if $g$ is the genus of the Milnor fiber of $(C,0)$, $\mu$ its Milnor number, $r$ its number of branches, and $\delta$ its delta-invariant, then $g = 1 + \delta - r$; since the multiplicity of $(C,0)$ is at least $r$, then $\delta \geq r(r-1)/2$, $g$ is positive unless the singular point is smooth or an ordinary double point.
(See, for instance,~\cite[Pages 572--574]{BrieskornKnorrer} for more details.)
\end{proof}

We now turn to the proof of Theorem~\ref{sing2}.
The strategy is the following.
Suppose we have a normal surface singularity, and let $\Gamma$ be the graph associated to its smallest good resolution.
If $\Gamma$ is not minimal, then we blow down to the minimal graph; we will show that this either has a singular curve or a point of higher intersection (e.g. a tangency or a singular point with more than two branches).
In either of the two cases, we can construct a divisor by smoothing (some of) the singularities, and this divisor will have positive genus.
If, on the other hand, $\Gamma$ is minimal, we will apply Wendl's theorem, and argue that there can be no vertices with higher genus, nor cycles in the graph, nor bad vertices (in a way similar to the proof of~\ref{general}).

\begin{proof}[Proof of Theorem~\ref{sing2}]
The canonical contact structure of the link of a normal surface singularity $(X, 0) \subset \CC^N$ has a symplectic filling given by a good resolution $\pi: \tilde{X} \to X$.
Note that $\tilde{X}$ lives in a blowup of $\CC^N$, hence it is K\"ahler, and in particular it has a symplectic form $\omega'$; 
the preimage $\pi^{-1}(0)$ is a complex divisor, and in particular it is symplectic.
More precisely, $Y= X \cap S^{2N-1}_{\eps}$ is filled by $W=\pi^{-1}(D^{2N}_{\eps})$, with the (restriction of the) symplectic structure $\omega'$.
The irreducible components of the exceptional divisor are then symplectic surfaces in $\tilde X$, so that $(Y, \xi)$  
is the convex boundary of a plumbing of symplectic surfaces; as in the introduction, the plumbing is encoded by the 
resolution graph $\Gamma$.  

We would like to use Wendl's theorem and arguments with vanishing cycles as before; however, the filling $(W, \omega')$ is not necessarily minimal, and we have to perform some blow-downs before a compatible Lefschetz fibration can be found. 
Reduction to the case of minimal fillings is done as follows.
If $(W, \omega')$ is not minimal, i.e. it contains a symplectic sphere $E$ with $E\cdot E=-1$, 
we use  Lemma~\ref{l:minimalGamma} to find a vertex of genus 0 and weight $-1$ in the graph $\Gamma$.
Suppose now that the graph $\Gamma$ contains vertices of genus 0 and weight $-1$. 
We blow down the corresponding divisors until we get a minimal graph.
The corresponding resolution may no longer be good; there may be singular curves among the components of 
the exceptional divisor or multiple intersection points.
We can smooth out the singular curve by replacing each singular point (of the curve in a surface) by its Milnor fiber.
Similarly, if there are intersection points of multiplicity greater than 2 or tangencies,
we also smooth them out (as a reducible singularity).
This process creates a divisor of positive genus, because, thanks to Lemma~\ref{l:genus-curve-sing}, the Milnor fibre of a curve singularity is planar if and only if we have a smooth or a double point.
As a result, we found a symplectic surface of positive genus in a symplectic filling of $(Y, \xi)$, so by Theorem~\ref{positivegenus}, $(Y, \xi)$ cannot be planar in this case.   

It remains to prove the statement of the theorem for the case where a good resolution $\tilde{X}$ is also minimal, i.e. $(W, \omega')$ contains no spheres of self-intersection $-1$. 
Obviously, in this case $\Gamma$ contains no vertices corresponding to spheres with weight $-1$. The minimal weak symplectic filling $(W, \omega')$ is deformation equivalent to a Lefschetz fibration, and we can use the results of Section~\ref{compute}.
We need to prove the following three facts, for $(Y,\xi)$ planar:

\begin{itemize}
\item[(g)] all surfaces in the plumbing have genus 0;
\item[(c)] there are no cycles in the graph;
\item[(b)] the graph has no bad vertices.
\end{itemize}

Condition (g) is guaranteed by Theorem~\ref{positivegenus}, so we only need to prove conditions (c) and (b).

\begin{itemize}
\item[(c)] Suppose that there is a cycle; namely, that there are classes $A_1,\dots, A_{c+1} = A_1$, such that 
$A_k \cdot A_{k+1} = 1$ for every $k = 1,\dots c$.
The divisor $A_1 \cup \dots \cup A_c$ is represented by symplectic spheres with positive, transverse intersections;
smoothing all intersections, we obtain a symplectic torus, which contradicts Theorem~\ref{positivegenus}.

\item[(b)] 
The filling $(W, \omega')$ is deformation equivalent to a Lefschetz fibration with the planar fiber;
write $\alpha_1, \dots \alpha_m$ 
for its vanishing cycles. 
The argument is now  similar to the one in the proof of Theorem~\ref{general}. 
Suppose $B$ is a vertex of the graph with $B\cdot B = \ell$, connected to vertices $A_1,\dots, A_n$.
Lemma~\ref{spheres-coefficients} describes the class of $B$;
without loss of generality, assume that $B = [\alpha_1 - \alpha_2 - \dots - \alpha_\ell]_W$, so that $\alpha_1$ dominates the curves $\alpha_2, \dots, \alpha_{\ell}$.

Again by Lemma~\ref{spheres-coefficients}, we know that there exist indices $i_k$ and collections of indices $J_k\not\ni i_k$ such that 
$A_k = [\alpha_{i_k} - \sum_{j\in J_k} \alpha_j]_W$, with $\alpha_{i_k} \succ \alpha_j$ for every $j \in J_k$.
For convenience, let $J_B = \{2,\dots,\ell\}$.
We can then compute:
\begin{align}
0 &= A_k\cdot A_{k'} = \delta(i_k,J_{k'}) + \delta(i_{k'},J_k) - \delta(i_k,i_{k'}) - \#(J_k \cap J_{k'}), \label{e:AA}\\
1 &= B\cdot A_{k} = \delta(1,J_{k}) + \delta(i_{k},J_B) - \delta(1,i_{k}) - \#(J_B \cap J_{k}) \label{e:AB}
\end{align}
where $\delta(i,J) = 1$ if $i\in J$, and is 0 otherwise, and $\delta(i,i') = 1$ if $i = i'$, and is 0 otherwise.

Let us focus on~\eqref{e:AB} first.
From it, we deduce that at least one among $1 \in J_{k}$ and $i_{k} \in J_B$ holds.
Suppose that both hold simultaneously;
then $\alpha_{1} \succ \alpha_{i_{k}}$ and $\alpha_{i_{k}} \succ \alpha_{1}$, 
which implies that the two curves $\alpha_{1}$ and $\alpha_{i_{k}}$ are homologous, 
and therefore that $B = [\alpha_1 - \alpha_{i_k}]_W$ and $A_k = [\alpha_{i_k} - \alpha_{1}]_W$, 
which clearly contradicts the assumption that $B\cdot A_k = 1$.

Next, we claim that $1 \in J_{k}$ can only hold for at most one of the classes 
$A_k = [\alpha_{i_k} - \sum_{j\in J_k} \alpha_j]_W$. Indeed, suppose that we have $1 \in J_{k}$ and $1 \in J_{k'}$ for two 
distinct classes $A_k$, $A_{k'}$. This implies that both leading terms $\alpha_{i_k}$ and $\alpha_{i_{k'}}$ dominate $\alpha_1$. 
Then we must have 
$\alpha_{i_k} \not\in J_{k'}$, because otherwise $\alpha_{i_k}$ would be separated from $\alpha_1$, and 
similarly $\alpha_{i_{k'}} \not\in J_{k}$. It follows that $A_k\cdot A_{k'} \leq -1$, a contradiction.

Finally, we want to show that $i_k \neq i_{k'}$ for every pair $k,k'$;
to this end, we use~\eqref{e:AA}.
Suppose that there are two indices, $k, k'$, such that $i_k = i_{k'} = i$;
this implies that $i\not\in J_k, J_{k'}$, and that $\delta(i_k, i_{k'}) = 1$.
But then $0 = A_k \cdot A_{k'} \le -1$, clearly a contradiction.

Summarizing, we see that the set $J_B=\{2, \dots, \ell\}$ must contain all the leading elements $i_1, \dots i_n$ 
of the classes $A_1, \dots, A_n$, except possibly one. Since  $i_1, \dots i_n$ are all distinct, it follows that 
that $n \le \ell$, i.e. that $B$ is not a bad vertex.
\qedhere
\end{itemize}
\end{proof}

We conclude this section with the proof of Corollary~\ref{deformation}.

\begin{proof}[Proof of Corollary~\ref{deformation}]
Let $(Y,\xi)$ be a planar contact 3-manifold.
The proof of Theorem~\ref{sing2} above shows that no filling of $(Y,\xi)$ can contain the exceptional divisor of a resolution of a non-planar singularity.
If there were a strong symplectic cobordism $(W',\omega')$ from the link $(Y_s,\xi_s)$ of a non-planar normal surface singularity to $(Y,\xi)$, then one could glue the resolution of $(Y_s,\xi_s)$ to obtain a strong symplectic filling $(W,\omega)$ of $(Y,\xi)$ containing a forbidden configuration.

The second half of the statement is now straightforward, since a deformation from $(S,0)$ to $(S',0)$ gives rise to a Weinstein cobordism from the link of $(S',0)$ to the link of $(S,0)$.
\end{proof}

\section{Planar Lefschetz fibrations with prescribed fundamental group}\label{pi1}

We will now construct planar Lefschetz fibrations with prescribed fundamental group.
Recall that the \emph{deficiency} of a presentation $\langle x_1,\dots,x_m \mid r_1, \dots, r_n \rangle$ is $m -n$, and that the \emph{deficiency} of a finitely presented group is the maximal deficiency over all its presentations.
A group is {\em perfect} if its abelianization is trivial.
 
\begin{prop}\label{p:ballgroups}
Let $G$ be a finitely presented group. Then there exists a planar Lefschetz fibration on a $4$-manifold $W$ 
with fundamental group $G$.
Moreover, if $G$ is perfect and has deficiency $0$, $W$ can be chosen to be an integral homology Stein $4$-ball. In this case, 
$\d W$ is an integral homology $3$-sphere.
\end{prop}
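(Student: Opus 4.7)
The plan is to reduce the proposition to a combinatorial question about presentations. For a Lefschetz fibration $W \to D^2$ with fiber $P$ a disk with $N$ holes and vanishing cycles $\alpha_1, \ldots, \alpha_R$, one has $\pi_1(W) \cong F_N / \langle\langle [\alpha_1], \ldots, [\alpha_R] \rangle\rangle$, where $F_N = \pi_1(P)$ is freely generated by loops $x_1, \ldots, x_N$ around the inner boundary components of $P$. Since $P$ is planar, every non-nullhomotopic simple closed curve in $P$ represents, up to conjugation and inversion, a \emph{subset product} $x_{i_1} x_{i_2} \cdots x_{i_\ell}$ with distinct indices and all exponents $+1$. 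The strategy is therefore to find, for the given $G$, a presentation whose relators are all subset products, and to realize each relator as a vanishing cycle on a sufficiently punctured disk.

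The key combinatorial step is the following. Start from any presentation $G = \langle x_1, \ldots, x_n \mid r_1, \ldots, r_m\rangle$. For each $i$ introduce an auxiliary generator $z_i$ with subset-product relation $x_i z_i$, forcing $z_i = x_i^{-1}$. For each letter position $(j,k)$ in each relator $r_j$ introduce a further generator $y_{j,k}$ together with a subset-product relation $y_{j,k} x_{i_{j,k}}$ when the $k$-th letter of $r_j$ is $x_{i_{j,k}}^{-1}$, and $y_{j,k} z_{i_{j,k}}$ when it is $x_{i_{j,k}}$; either way this forces $y_{j,k}$ to equal the corresponding letter. Each original relator then takes the form $y_{j,1} y_{j,2} \cdots y_{j,\ell_j}$, which is itself a subset product. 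This modification produces a presentation of the same group $G$ with all relators of the desired form, and crucially it adds the same number of generators as relations, so the deficiency is unchanged. Geometrically, take $P = \Sigma_{0, N+1}$ where $N$ is the total number of generators in the new presentation, and for each subset-product relator take a simple closed curve in $P$ encircling exactly the corresponding holes, as a vanishing cycle in a distinct fiber. The resulting planar Lefschetz fibration $W \to D^2$ satisfies $\pi_1(W) \cong G$ by construction, and since each vanishing cycle is homotopically essential in $P$, the Lefschetz fibration is allowable and $W$ admits a compatible Stein structure by Loi--Piergallini / Akbulut--Ozbagci.

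For the second part, assume $G$ is perfect and of deficiency zero, so start with a presentation with $m = n$. Then the modified presentation also satisfies $N = R$, where $N$ and $R$ are its numbers of generators and relations. The associated handle decomposition of $W$ (one $0$-handle, $N$ $1$-handles, $R$ $2$-handles) gives $\chi(W) = 1 - N + R = 1$, and $H_1(W) = G^{\mathrm{ab}} = 0$ since $G$ is perfect. Because there are no handles of index $\ge 3$, the group $H_2(W)$ is free abelian, equal to the kernel of the cellular boundary $\ZZ^R \to \ZZ^N$; its rank is $R - N = 0$ (using $\chi(W) = 1$), so $H_2(W) = 0$ and $W$ is an integral homology $4$-ball. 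By Lefschetz duality and the long exact sequence of the pair $(W, \partial W)$ it follows that $\partial W$ is an integral homology $3$-sphere. The main obstacle is the combinatorial reformulation of presentations into subset-product form while preserving deficiency; once this is in place, the geometric realization and the homological calculation are essentially routine.
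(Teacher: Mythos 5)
Your overall strategy mirrors the paper's: reduce to a combinatorial statement about presentations, then realize the modified relators as simple closed curves on a multi-punctured disk. The combinatorial step is the heart of the matter, and here you differ in execution: the paper converts an arbitrary presentation to one whose relators are realizable by embedded curves via an inductive ``badness''-reducing argument (Lemma~\ref{l:goodpresentation}), whereas you do it in one shot by introducing a fresh generator $z_i$ for each inverse $x_i^{-1}$ plus a fresh generator $y_{j,k}$ for every single letter occurrence. Both preserve deficiency; yours is more wasteful (producing more holes in the page, and a worse bound on $\chi(P)$), but is simpler to state.

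There is, however, a genuine gap. You assert that once a relator $s$ is a subset product $y_{i_1}\cdots y_{i_\ell}$ with distinct indices and positive exponents, it is realized by the simple closed curve in $P$ ``encircling exactly the corresponding holes.'' This is not automatic: a simple closed curve enclosing a given set of holes represents, up to conjugacy and inversion, the product of the loops around those holes in the \emph{cyclic order dictated by their arrangement in the plane}, which need not agree with the order in which they appear in $s$. (For instance $x_1x_3x_2$ and $x_1x_2x_3$ are not conjugate in the free group, so killing the wrong one changes $\pi_1$.) This is precisely why the paper's Lemma~\ref{l:goodpresentation} records property~(c) --- the cyclic order of the generators is consistent across words --- and invokes it explicitly when arguing that $\alpha_j$ represents $s_j$. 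Your construction does in fact satisfy the analogous compatibility: the long relators $y_{j,1}\cdots y_{j,\ell_j}$ use pairwise disjoint fresh $y$-generators, so the corresponding holes can be placed in the correct cyclic order for each long word independently, while every remaining relator has length $2$, where cyclic order is vacuous --- but you need to say so, since without some such argument the claim $\pi_1(W)\cong G$ is unjustified.

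The rest (handle count, $\chi(W)=1-N+R$, $H_1(W)=G^{\mathrm{ab}}=0$ from perfectness, $H_2(W)=\ker(\partial_2)$ torsion-free of rank $R-N=0$ once $\partial_2$ is onto, hence an integral homology ball with integral homology sphere boundary) matches the paper's calculation and is correct, modulo excluding the degenerate case of length-$0$ relators, which should be discarded at the outset.
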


The family of perfect, finitely presented groups of deficiency 0 is quite rich: for instance, it contains fundamental groups of integral homology spheres.
In fact, let $G = \pi_1(Y)$ be the fundamental group of an integral homology 3-sphere $Y$; $G$ is perfect since its abelianization is $H_1(Y) = 0$.
Moreover, $G$ has non-positive deficiency, since it is perfect;
it has non-negative deficiency since a genus-$g$ Heegaard decomposition of $Y$ gives a presentation of $G$ with $g$ generators and $g$ relators (which is, in particular, a finite presentation).

The proof of Proposition~\ref{p:ballgroups} easily follows from the following lemma.

\begin{lemma}\label{l:goodpresentation}
Let $\langle x_1,\dots,x_m \mid r_1, \dots, r_{m-d} \rangle$ be a presentation of a group $G$ of deficiency $d$. 
Then there exists another presentation $\langle y_1,\dots,y_n \mid s_1, \dots, s_{n-d} \rangle$ of $G$ such that:
\begin{itemize}
\item[(p)] each word $s_j$ is a positive word in $y_1, \dots, y_n$;
\item[(r)] each generator $y_i$ appears at most once in each word $s_1,\dots,s_{n-d}$;
\item[(c)] the cyclic order of the generators $y_i$ is preserved in each word $s_j$.
\end{itemize}
\end{lemma}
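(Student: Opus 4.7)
The plan is to convert the given presentation into the required form by two successive Tietze-style modifications, each of which preserves the deficiency $d$. The first pass will make every relator positive; the second pass will replace each letter of each positive relator by a fresh copy, so that conditions (r) and (c) can be enforced simultaneously by choosing a suitable global cyclic order.

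For the first pass I would adjoin, for each original generator $x_i$, a new generator $\bar x_i$ (to play the role of $x_i^{-1}$) together with the length-$2$ positive relator $x_i\bar x_i$. This adds equal numbers of generators and relators, so the deficiency is unchanged, and every $r_j$ can be rewritten as a positive word $r_j^+$ in the enlarged alphabet by substituting $\bar x_i$ for each occurrence of $x_i^{-1}$. For the second pass, for each positive relator $r_j^+ = z_{j,1}z_{j,2}\cdots z_{j,k_j}$ (with $z_{j,p}\in\{x_i,\bar x_i\}$) I would adjoin at each position $p$ a pair of generators $y_{j,p},\bar y_{j,p}$ along with the two length-$2$ positive relators $y_{j,p}\bar y_{j,p}$ and $y_{j,p}\bar z_{j,p}$ (where $\bar z_{j,p}$ denotes the alphabet letter dual to $z_{j,p}$), and then replace $r_j^+$ by the new relator $y_{j,1}y_{j,2}\cdots y_{j,k_j}$. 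Each position again contributes equally many generators and relators, so the deficiency remains $d$.

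Condition (p) is immediate from this construction, and (r) holds throughout because the long relators use only the fresh, distinct letters $y_{j,1},\dots,y_{j,k_j}$, while every other new relator has length $2$ with two distinct letters. To secure (c) I would choose the global cyclic order so that the fresh letters of each relator form a single consecutive increasing block, listing $y_{1,1},\dots,y_{1,k_1},y_{2,1},\dots,y_{2,k_2},\dots$ in order, with the remaining generators $x_i$, $\bar x_i$, $\bar y_{j,p}$ inserted into the cycle in arbitrary positions; each long relator is then cyclically ordered by construction, and every length-$2$ relator respects any cyclic order trivially. The one point requiring a little care, which I would settle by a direct inspection of the Tietze moves, is verifying that the accumulated new length-$2$ relators impose precisely the intended identifications $\bar x_i=x_i^{-1}$, $y_{j,p}=z_{j,p}$, $\bar y_{j,p}=y_{j,p}^{-1}$, from which $y_{j,1}\cdots y_{j,k_j}=r_j^+=1$ recovers the original relation and confirms that the final presentation still defines $G$.
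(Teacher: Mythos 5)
Your proof is correct, but it takes a different organizational route from the paper's. Both arguments rest on the same elementary Tietze move: adjoin a fresh generator together with a length-$2$ positive relator pinning it to (the inverse of) an existing letter, which preserves deficiency because one generator and one relator are added together. The paper packages this as an induction on a ``badness'' potential $b(\mathcal{P})=b_-(\mathcal{P})+b_+(\mathcal{P})$, which counts inverse occurrences and excess occurrences of generators in long words; each move strictly decreases $b$, and when $b=0$ properties (p), (r), (c) hold up to reordering because each generator then lies in at most one long word. You instead apply the moves in two bulk passes: one pass eliminating all inverses, and a second pass renaming each position of each long relator by a fresh generator, after which a cyclic order making the fresh letters of each long relator consecutive does the job (length-$2$ relators respecting every cyclic order, as you note). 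Your construction is somewhat more profligate than necessary: in the second pass the auxiliary generators $\bar y_{j,p}$ and relators $y_{j,p}\bar y_{j,p}$ do no work and can be dropped, since a single new generator $y_{j,p}$ with the one relator $y_{j,p}\bar z_{j,p}$ already keeps the deficiency balanced while enforcing $y_{j,p}=z_{j,p}$. The paper's more careful bookkeeping, besides being economical, also yields the quantitative bound $\chi(P)\ge 1-n-2b_+(\mathcal{P})-b_-(\mathcal{P})$ on the Euler characteristic of the resulting page, which your uniform renaming would weaken.
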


\begin{proof}[Proof of Proposition~\ref{p:ballgroups}]
By Lemma~\ref{l:goodpresentation}, $G$ has a presentation $\langle y_1,\dots,y_n \mid s_1, \dots, s_{n-d} \rangle$ with the properties (p), (r), and (c) as above.
Consider the $n$-holed disk $P$, with fundamental group $\pi_1(P) = \langle y_1, \dots, y_n\rangle$. We assume that the generators 
$y_1, y_2, \dots, y_n$ are given by loops going around one hole each, as in Figure~\ref{f:pi1gens}.
By the properties (p), (r), and (c), each word $s_j$ is represented by an embedded simple closed curve $\alpha_j$ in $P$. Indeed, 
since by (p) and (r) each generator enters in the word $s_j$ positively and at most once, 
we can take the curve $\alpha_j$ enclosing the corresponding holes, 
with a counterclockwise orientation. By (c), the cyclic order of the generators in the loop given by $\alpha_j$ is the same 
as in the word $s_j$; it follows that $\alpha_j$ represents $s_j$.
See Figure~\ref{f:allowedword} for an example.
Let $\phi$ be the product of positive Dehn twists along $\alpha_1,\dots,\alpha_{n-d}$.
By construction, the associated Lefschetz fibration $W$ is a Stein domain whose fundamental group is precisely $G$.

If $G$ is perfect, $H_1(W)=G/G'$ vanishes; if, moreover, $G$ has deficiency 0, using a presentation with $d=0$ yields $H_2(W)=0$, since the classes $\alpha_1, \dots, \alpha_n$ are linearly independent in $H_1(P)$.
\end{proof}

\begin{figure}
\begin{subfigure}[c]{0.4\textwidth}
    \labellist
    \pinlabel $y_1$ at 91 277
    \pinlabel $y_2$ at 123 120
    \pinlabel $y_3$ at 273 80
    \pinlabel $y_4$ at 350 227
    \pinlabel $y_5$ at 228 329
    \endlabellist
        \includegraphics[width=\textwidth]{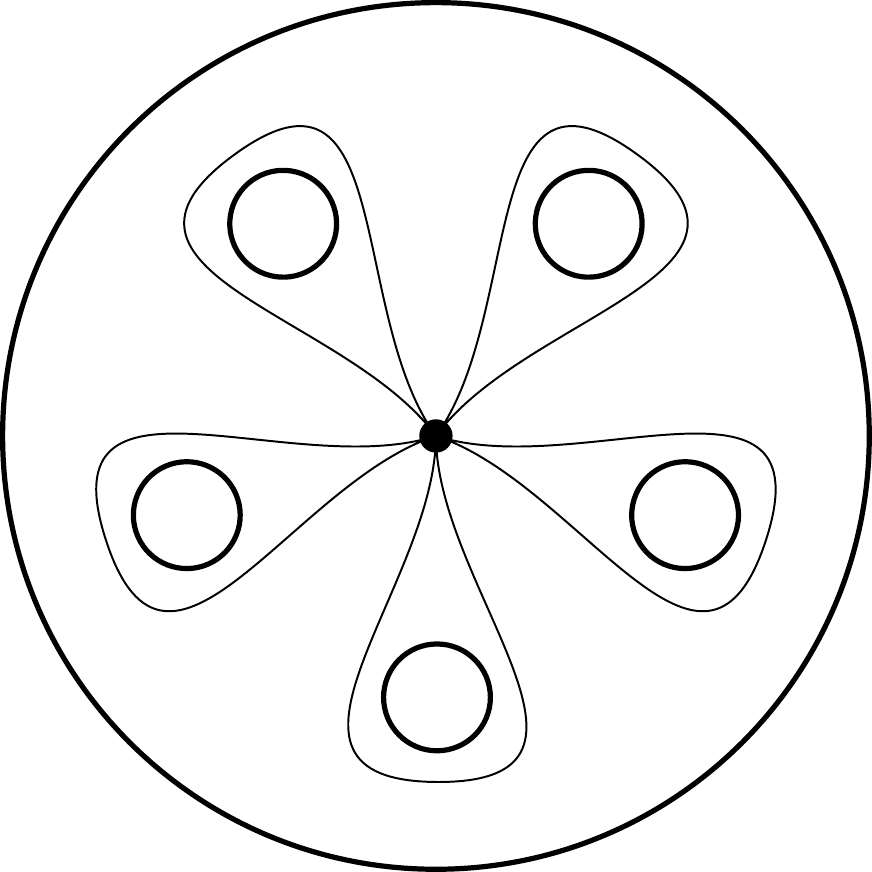}
        \caption{The chosen generators of the fundamental group of the 5-holed disk, with basepoint in the center.}
        \label{f:pi1gens}
\end{subfigure}
\hspace{0.1\textwidth}
\begin{subfigure}[c]{0.4\textwidth}
    \labellist
    \pinlabel $y_1$ at 103 283
    \pinlabel $y_2$ at 113 125
    \pinlabel $y_3$ at 273 80
    \pinlabel $y_4$ at 350 215
    \pinlabel $y_5$ at 241 332
    \endlabellist
        \includegraphics[width=\textwidth]{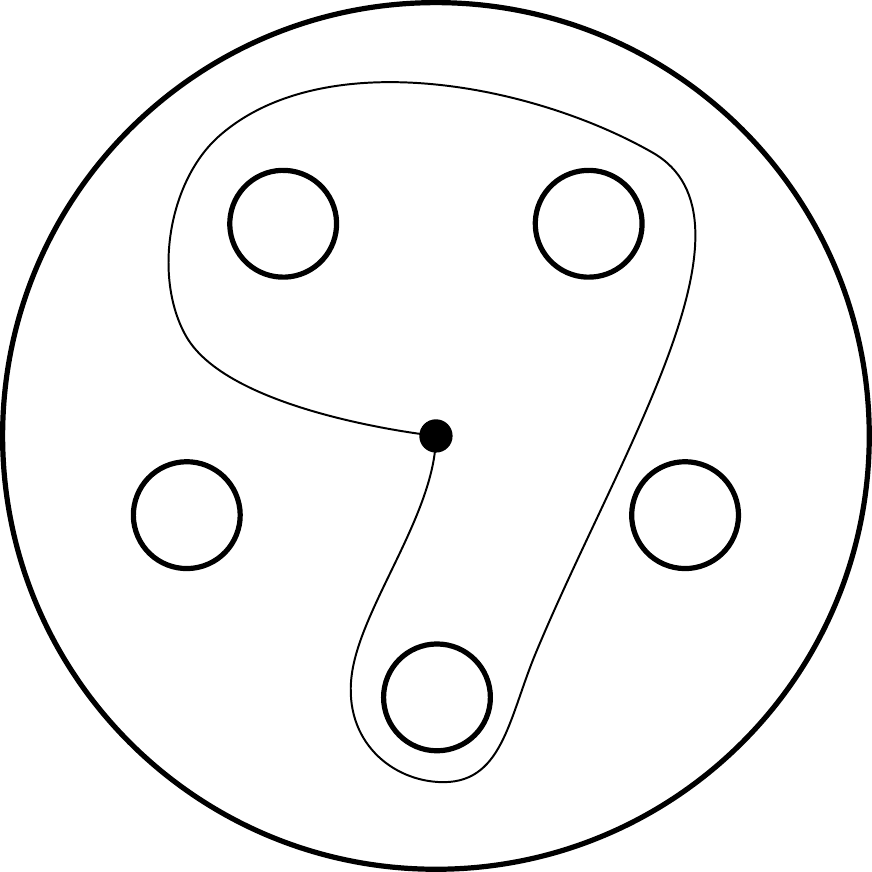}
        \caption{A simple closed curve representative for the word $y_3y_5y_1$.\\}
        \label{f:allowedword}
\end{subfigure}
\caption{Generators and a simple closed curve in the 5-holed disk.}\label{f:prc}
\end{figure}

Before proving the lemma, let us introduce the concept of badness for a presentation.
We say that a word is \emph{long} if its length is at least $3$, and
\emph{short} if it is of length $2$.
Given a presentation $\mathcal{P} = \langle a_1,\dots,a_m \mid w_1, \dots, w_{m-d} \rangle$,  we define its \emph{badness} $b(\mathcal P)$ as follows.
Let $b_-(\mathcal{P})$ be the sum of the number of occurrences of $a_i^{-1}$ over all generators $a_i$;
let also $b^i_+(\mathcal{P})$ be the sum of the number of occurrences of $a_i$ in long words,
and $b_+(\PP) = \sum_i\max\{b^i_+(\PP)-1,0\}$ (that is, we are ignoring the first appearance of each $a_i$ in long words, if there is one, as well as all appearances of $a_i$ in short words).
Let $b(\PP) = b_-(\PP)+b_+(\PP)$.

For instance, consider the presentation $\PP = \langle a,b,c,d \mid bad, cab, ab\inv ac\inv\rangle$; then we have $b_-(\PP) = 2$, $b_+^a(\PP) = 4$, $b_+^b(\PP) = 2$, $b_+^c(\PP) =b_+^d(\PP) = 1$, and hence $b(\PP) = 6$.

A key feature of $b$ that will be used in the proof is that it is insensitive to the labelling of the generators, in the sense that it is invariant under permutation of the indices of generators.

\begin{proof}[Proof of Lemma~\ref{l:goodpresentation}]
Notice that a presentation of badness 0 satisfies properties (p), (r), and (c),
up to reordering the generators.
(The converse, however, is not true.)
In fact, short words respect all cyclic orders, and, when the badness is 0, each generator appears in at most one long word; hence each long word can be used to define a compatible order on the corresponding subset of generators.

To prove the lemma, it is enough to show that, given a presentation $\mathcal{P}$ of deficiency $d$ and positive badness, we can always find another presentation $\mathcal{P'}$ for the same group with the same deficiency and with $b(\mathcal{P}') < b(\mathcal{P})$.

There are three cases to consider. Either the inverse of a generator appears, or a generator appears more than once somewhere in the presentation.
Without loss of generality, assume that that this generator is $a_1$, and let $w$ be one of the culprit words.

In the first case, the presentation $\mathcal{P}'$ is obtained from $\mathcal{P}$ by adding a generator $a_{m+1}$ and the relation $a_1a_{m+1}$, so that $a_{m+1} = a_1\inv$; we then replace one occurrence of $a_1\inv$ in $w_1$ by $a_{m+1}$.
We have replaced one occurrence of $a_1\inv$ with a new generator, so $b_-(\PP') = b_-(\PP)-1$, 
and we created a positive short word, so $b_+(\PP') = b_+(\PP)$.

In the second case, we add two generators $a_{m+1}, a_{m+2}$ and the relations $a_1a_{m+1}$, $a_{m+1}a_{m+2}$, 
so that $a_{m+2} = a_{m+1}\inv = a_1$; we then replace one occurrence of $a_1$ in $w$ by $a_{m+2}$.
We have replaced one extra occurrence of $a_1$ with a new generator, and created two positive short words, 
so $b_-(\PP') = b_-(\PP)$ and $b_+(\PP') = b_+(\PP)-1$.

In either case, $b(\PP') = b(\PP)-1$, and this concludes the proof of the lemma.
\end{proof}

In fact, one can extract a bound on the Euler characteristic of the page $P$ in terms of the original presentation $\mathcal{P}$ for $G$: the algorithm above gives a page $P$ with $\chi(P) \ge 1-n-2b_+(\mathcal{P})-b_-(\mathcal{P})$.

\bibliographystyle{amsplain}
\bibliography{nonplanar}

\end{document}